\newcommand{\R}{\mathbb{R}}
\newcommand{\pa}{\partial}
\newcommand{\ve}{\varepsilon}
\newcommand{\vp}{\varphi}
\newcommand{\md}{\mathrm{d}}
\newcommand{\vpa}{\varphi_\ast}
\newcommand{\fa}{f_\ast}
\newcommand{\feps}{f_\ve}
\newcommand{\geps}{g_\ve}
\newcommand{\tf}{\tilde{f}}
\newcommand{\af}{f_*}
\newcommand{\sgn}{\operatorname{sgn}}
\newcommand{\be}[1]{\begin{equation}\label{#1}}
\newcommand{\ee}{\end{equation}}
\newcommandx{\unsure}[2][1=]{\todo[linecolor=red,backgroundcolor=red!25,bordercolor=red,#1]{#2}}
\newcommandx{\change}[2][1=]{\todo[linecolor=blue,backgroundcolor=blue!25,bordercolor=blue,#1]{#2}}
\newcommandx{\info}[2][1=]{\todo[linecolor=green,backgroundcolor=green!25,bordercolor=green,#1]{#2}}
\newcommandx{\improvement}[2][1=]{\todo[linecolor=yellow,backgroundcolor=yellow!25,bordercolor=yellow,#1]{#2}}
\newcommandx{\biblio}[2][1=]{\todo[linecolor=blue,backgroundcolor=magenta!25,bordercolor=blue,#1]{#2}}
\newcommandx{\laura}[2][1=]{\todo[linecolor=violet,backgroundcolor=violet!25,bordercolor=violet,#1]{#2}}
\newtheorem{theorem}{Theorem}
\newtheorem{lemma}[theorem]{Lemma}
\tikzset{>=stealth}
\begin{document}
	
\title{Two kinetic models for non-instantaneous binary alignment collisions}
\author{L. Kanzler\thanks{CEREMADE, Université Paris Dauphine, Place du Maréchal de Lattre de
Tassigny, F-75775 Paris Cedex 16, France.\tt{laura.kanzler@dauphine.psl.eu}} \and 
C. Schmeiser\thanks{University of Vienna, Faculty for Mathematics, Oskar-Morgenstern-Platz 1, 1090 Wien, Austria. 
{\tt christian.schmeiser@univie.ac.at}} \and 
V. Tora \thanks{University of Rome "Tor Vergata", Department of Mathematics, Via della Ricerca Scientifica, 1, 00133, Rome, Italy.
	{\tt  veronica.tora2@unibo.it}} \and }
	\date{\vspace{-5ex}}
	\maketitle
	
\begin{abstract}
A new type of kinetic models with non-instantaneous binary collisions is considered. Collisions are described by a transport process in the joint state space of a pair of particles.
The interactions are of alignment type, where the states of the particles approach each other. For two spatially homogeneous models with deterministic or stochastic collision
times existence and uniqueness of solutions, the long time behavior, and the instantaneous limit are considered, where the latter leads to standard kinetic models of 
Boltzmann type.
\end{abstract}

\begin{keywords}
	Kinetic transport, binary collisions, non-instantaneous collisions, alignment.
\end{keywords}\medskip

\textbf{\textit{AMS subject classification:}} 35Q20, 35B40, 35Q70\\

{\bf\em Acknowledgments:} This work has been supported by the Austrian Science Fund (grant nos. F65 and W1245). C.S. also acknowledges the hospitality 
of the Institut Henri Poincar\'e (UAR 839 CNRS-Sorbonne Universit\'e), and LabEx CARMIN (ANR-10-LABX-59-01).

\section{Introduction}

The main aim of this work is to initiate the investigation of a new class of kinetic models for ensembles of particles undergoing binary, \emph{non-instantaneous} collisions. 
The idea is to replace instantaneous jumps in the joint state space of a pair of particles by continuous processes taking finite time. The duration of the collision process can be deterministic (as, for example, the interaction of two soft elastic balls) or stochastic. 
Models of this new type have so far not appeared in the mathematical literature, but related models can be found in the physics literature dealing with non-instantaneous interactions of quantum particles (see, e.g. \cite{c5:LSM}).

The standard approach of kinetic theory is to model interactions between agents via \emph{jump processes} on the state space, with the Boltzmann equation of gas dynamics \cite{c5:CIP} as prototypical example. This is an idealization in the sense that in reality these interactions take a finite time span, where states change in a continuous fashion. For passive
particles, such as gas molecules, the approximation by instantaneous collisions is typically consistent with the limit of small particle size. The present work is motivated by 
attempts to model ensembles of living agents, where the changes of state are the result of often complicated internal processes, and not simple mechanical interactions.
Examples are the run-and-tumble behavior of E. coli bacteria, where the typical instantaneous modeling \cite{CalRaoSch} of the tumble phase is somewhat questionable,
since its actual duration is about 10\% of the duration of the run phase \cite{Berg}. Other examples with possibly long interaction times are contact inhibition of movement
upon cell-cell collisions \cite{Guelstein} or collisions under the presence of cell-cell adhesion \cite{Harvey}.

As a first step in the mathematical treatment of non-instantaneous collisions, we shall analyze two models, which can be interpreted as simple descriptions of alignment or opinion formation processes, where in the latter case collisions represent
discussion processes between two individuals leading towards convergence of opinions. 

In the {\bf Stochastic Collision Time Model} (SCTM) we assume the duration of the process to be stochastic and governed by a Poisson process with constant parameter $\gamma>0$. Each particle can be involved in a binary collision with a second particle, or it can be in a free state, i.e. between collisions.
The model consists of two coupled equations, one for the distribution function $f=f(\vp,t)$ at time $t>0$ of free particles with respect to the state $\vp\in\R$, and the other 
for the distribution function $g=g(\vp,\vpa,t)$ of pairs of particles in a collision process with $\vp, \vpa \in \R$. The system is of the form
\begin{equation}\label{mainmodel}
\begin{split}
	&\pa_t f = 2\left(\gamma \int_{\R} g \: \md \vpa- \lambda f \int_{\R} \fa\: \md \vpa  \right) \,, \\
	&\pa_t g + \nabla \cdot (v_1 g)=\lambda \fa f -  \gamma g \,,
\end{split}
\end{equation}
where the abbreviation $\fa := f(\vpa,t)$ is used. The \emph{collision rate} with rate constant $\lambda>0$ is assumed to be independent from the pre-collisional states. 
The factor 2 in the first equation is due
to the fact that pairs of free flying particles are lost/gained at the beginning/end of a collision. The transport term with $\nabla = (\partial_\vp,\partial_{\vpa})$ in the second equation describes the collision process
\begin{equation}\label{ODE-charac}
  \begin{pmatrix} \dot\vp \\ \dot\vpa \end{pmatrix} = v_1(\vp,\vpa) := \frac{\vp-\vpa}{2} \begin{pmatrix} -1 \\ 1 \end{pmatrix} \,.
\end{equation}
We shall be interested in the initial value problem with
\begin{equation}\label{IC}
  f(\vp,0) = f_0(\vp) \,,\quad g(\vp,\vpa,0) = g_0(\vp,\vpa) \,,\qquad \vp,\vpa \in \R \,,
\end{equation}
where the initial data satisfy 
\begin{equation}\label{IC-ass}
    f_0,g_0 \ge 0 \,,\quad \int_\R (1+\vp^2) f_0 \:\md\vp < \infty \,,\quad \int_{\R^2} (1+ \vp^2) g_0 \:\md\vp\:\md\vpa < \infty \,,\quad g_0(\vp,\vpa) = g_0(\vpa,\vp)\,.
\end{equation}
Note that the collision dynamics propagates the {\em indistinguishability} property, i.e. 
$$
   g(\vp,\vpa,t) = g(\vpa,\vp,t) \,,\qquad \mbox{for } \vp,\vpa\in\R \,,\quad t>0 \,,
$$ 
holds for (unique) solutions of the initial value problem.

In the {\bf Deterministic Collision Time Model} (DCTM) the collision process is governed by an ODE system like \eqref{ODE-charac}, but with the drift velocity $$v_2(\vp,\vpa) = \frac{\sgn(\vp-\vpa)}{2}\begin{pmatrix} -1 \\1
\end{pmatrix}\,.$$
In this case complete alignment, i.e. $\vp=\vpa$, is reached in finite time, after which the collision ends. This leads to the model
\begin{equation}\label{mainmodel-d}
\begin{split}
	&\pa_t f = 2\left(2\bar g- \lambda f \int_{\R} \fa\: \md \vpa  \right) \,, \qquad \bar g(\vp,t) = g(\vp,\vp,t) \,,\\
	&\pa_t g + \nabla \cdot (v_2 g)=\lambda \fa f  \,,\qquad \vp\ne\vpa \,.
\end{split}
\end{equation}
The term $2\bar g$ should be interpreted as the sum of the traces of $g$, as the main diagonal in the $(\vp,\vpa)$-plane is approached from $\vp>\vpa$ and $\vp<\vpa$. 
The traces are equal by the indistinguishability property.

More general models can be produced in various ways. For example 
\begin{itemize}
\item Other choices of collision potentials can be used instead of $v_1$, $v_2$.
\item The collision rate $\lambda$ could depend on the pre-collisional states $\vp$ and $\vpa$.
\item The collision stopping rate $\gamma$ in the SCTM could depend on the two-particle-state $(\vp,\vpa)$.  
\item In the DCTM the interaction potential $v_1$ could be combined with a fixed prescribed collision time
    or with a collision time, where the pre-collisional distance is reduced by a fixed ratio.
\end{itemize}

Explicit solution of the equations for $g$ in \eqref{mainmodel} and in \eqref{mainmodel-d} (actually carried out below in Sections \ref{c5:ex_prob} and, respectively, \ref{sec:det_ex}) and substitution
into the corresponding $f$-equations leads to kinetic models with time delays in the gain terms, reminiscent of formally derived semiclassical models for interacting
quantum particles \cite{c5:LSM}. It has been observed that these models violate the conservation laws of mass and energy, which is a straightforward observation for the
models considered here, when only the free particles with distribution function $f$ are considered. In the terminology of \cite{c5:LSM}, the pair distribution $g$ in our models
is an explicit account of the so called \emph{correlated density,} used for correcting the conservation laws.

There is a sizeable literature on models of alignment interactions, falling into two classes. Swarming models describing flocking behavior are typically based on mean-field
interactions with the Vicsek model \cite{c5:VCBCS} as a classical example and with, e.g.  \cite{c5:BCC,c5:degond}, as kinetic versions. 
On the other hand, alignment of rod-shaped polymers or of bacterial colonies is driven by pairwise interactions and has been modeled as instantaneous binary collisions 
\cite{c5:aronson,c5:bertin,c5:carlen,c5:HKMS}. Similar models are used for describing opinion formation \cite{Toscani} and the exchange of proteins between cells \cite{c5:HFMW}. Typical results are long-time convergence to aligned equilibria as well as the derivation of macroscopic models for aligned states
\cite{c5:DFR,c5:degond,c5:HKMS}. These models share the property of shrinking phase space volume with the inelastic Boltzmann equation for granular gases
 \cite{c5:bobylev1, c5:carillo, c5:mischler}. 
 
In the setting of gas dynamics with instantaneous collisions, the restriction to binary interactions is justified \cite{c5:CIP}. In the case of non-instantaneous collisions it would be incorrect since, strictly speaking, there is a positive probability that during an extended collision period a colliding pair is joined by additional particles. We expect, however, that for short collision times the importance of higher order collisions decreases with their order. A further investigation of this question will be the subject of future work. Extended models with more than two collision partners might be a fruitful approach to the problem of kinetic modeling of higher order chemical reactions. 

The SCTM has the special property that the moments of $f$ and $g$ with a certain order satisfy closed systems of ODEs. This property is presented in the following section
and used to derive not only conservation of the total mass and of the mean state, but also the long-time distribution of the mass between the free particles and those involved 
in collisions, as well as the fact that the variances decay to zero. The latter is the essential information used in the proof of decay to an aligned state, carried out in Section \ref{c5:ex_prob}
together with an existence and uniqueness proof for the SCTM. Section \ref{sec:il1} is dedicated to the instantaneous limit in the SCTM, where a rescaling is introduced making the
collision process fast and short. Some formal properties of the DCTM are collected in Section \ref{c5:noninstdet}, and an existence and uniqueness result is proved in Section \ref{sec:det_ex}. It is weaker
than for the SCTM in the sense that no continuity in time is obtained because of a lack of control of the trace $\bar g$. This also causes the absence of a decay-to-equilibrium
result for the DCTM and the lack of a rigorous justification of the instantaneous limit, formally carried out in Section \ref{sec:il2}.

\section{Formal properties of the Stochastic Collision Time Model -- moments}\label{sec:formal}

\paragraph{Collision rules:} Integration of the characteristic equations \eqref{ODE-charac} with initial state $(\vp',\vpa')$ for the duration $s$ of a collision gives the 
collision rules, i.e. the map from pre-collisional states $(\vp',\vpa')$ to post-collisional states $(\vp,\vpa)$,
\begin{equation}\label{coll-rule}\begin{split}
	\vp &= \Phi^s(\vp',\vpa') := \frac{\vp'+\vpa'}{2}+e^{-s}\frac{\vp'-\vpa'}{2} \,, \\
	\vpa &= \Phi_*^s(\vp',\vpa') := \frac{\vp'+\vpa'}{2}+e^{-s}\frac{\vpa'-\vp'}{2} \,,
\end{split}\end{equation}
which we would also use in a kinetic model with instantaneous collisions. Note that the pre-collisional states can be computed from the post-collisional ones by time-inversion:
$\vp' = \Phi^{-s}(\vp,\vpa)$, $\vpa' = \Phi_*^{-s}(\vp,\vpa)$.

\paragraph{Moments:} We expect the total mass
\begin{align}\label{masses}
   M := M_f + 2M_g \qquad\mbox{with } \quad M_f:= \int_{\R} f \: \md\vp \quad\text{and}\quad M_g := \int_{\R^2} g \: \md\vp\:\md\vpa 
\end{align}
to be conserved, but we can actually obtain more detailed information, since the partial masses solve the closed ODE system
\begin{equation}\label{ODEmasses}
	\begin{split}
		\dot{M_f} &= 2\gamma M_g - 2\lambda M_f^2 \,,\\
		\dot{M_g} &= \lambda M_f^2 - \gamma M_g \,.
	\end{split}
\end{equation}
Noting the desired conservation property
$$
   M = M_{f_0} + 2 M_{g_0} \,,
$$
it can be solved explicitly, establishing exponential convergence of $(M_f(t),M_g(t))$ as $t\to\infty$ to
\begin{equation}\label{M-infty}
   (M_{f_\infty},\, M_{g_\infty}) := 
    \frac{2M}{1 + \sqrt{1 + 8\lambda M/\gamma}} \left( 1,\, \frac{2\lambda M/\gamma}{1 + \sqrt{1 + 8\lambda M/\gamma}}\right) \,.
\end{equation}
Analogously, we obtain for the first order moments
\begin{align}\label{momenta}
   I := I_f + 2I_g \qquad\mbox{with } \quad I_f:= \int_{\R} \vp f \: \md\vp \quad\text{and}\quad I_g := \int_{\R^2} \vp g \: \md\vp\:\md\vpa 
\end{align}
the ODE system
\begin{equation}\label{ODEmomenta}
	\begin{split}
		\dot{I_f} &= 2\gamma I_g - 2\lambda I_f M_f \,,\\
		\dot{I_g} &= \lambda I_f M_f - \gamma I_g \,,
	\end{split}
\end{equation}
which can again be solved explicitly, leading to the second conservation law
$$
   I = I_{f_0} + 2 I_{g_0} \,,
$$
and to the convergence of $(I_f(t),I_g(t))$ as $t\to\infty$ to
\begin{align}\label{limits-momenta}
	(I_{f_\infty},\, I_{g_\infty}) = \frac{2I}{1 + \sqrt{1 + 8\lambda M/\gamma}} \left( 1,\, \frac{2\lambda M/\gamma}{1 + \sqrt{1 + 8\lambda M/\gamma}}\right) \,.
\end{align}
With the mean state
\begin{equation}\label{phiinfty}
\vp_\infty :=\frac{I}{M}=\frac{I_{f_\infty}}{M_{f_\infty}}=\frac{I_{g_\infty}}{M_{g_\infty}}  \,,
\end{equation}
we define the  variances
\begin{align*}
	V_f:=\int_{\R} (\vp-\vp_\infty)^2 f \, \md \vp \,, \qquad V_g:=\int_{\R} (\vp-\vp_\infty)^2 g \, \md \vpa \md \vp \,,
\end{align*}
as well as the additional variance-type second order moment
$$
    \bar V_g := \int_{\R} (\vp-\vpa)^2 g \, \md \vpa \md \vp \,.
$$
These three quantities satisfy the ODE system
\begin{equation}\label{ODEvariances}
	\begin{split}
		\dot{V_f} &= 2\gamma V_g - 2\lambda M_f V_f \,,\\
		\dot{V_g} &= \lambda M_f V_f - \gamma V_g - \bar V_g \,,\\
		\dot{\bar V}_g &= 2\lambda M_f V_f - (2+\gamma)\bar V_g + 2\lambda(I_f - \vp_\infty M_f)^2 \,.
	\end{split}
\end{equation}
This shows that the total variance $V:= V_f+2V_g$ is nonincreasing. However, investigation of the full ODE system provides a much stronger result.

\begin{lemma}
Let $\lambda,\gamma>0$, let the assumptions \eqref{IC-ass} on the initial data hold with $M>0$, and let $M_f$ and $I_f$ be given by solving \eqref{ODEmasses}, 
\eqref{ODEmomenta}. Then there exist constants $C,\mu>0$ such that the solution $(V_f,V_g,\bar V_g)$ of \eqref{ODEvariances} satisfies
$$
    V_f(t) + V_g(t) + \bar V_g(t) \le C e^{-\mu t} \,,\qquad t\ge 0 \,.
$$
\end{lemma}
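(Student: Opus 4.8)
The plan is to regard \eqref{ODEvariances} as a \emph{linear non-autonomous} system for the vector $y(t):=(V_f(t),V_g(t),\bar V_g(t))^\top$, of the form $\dot y = A(t)\,y + b(t)$ with
\[
A(t) = \begin{pmatrix} -2\lambda M_f(t) & 2\gamma & 0 \\ \lambda M_f(t) & -\gamma & -1 \\ 2\lambda M_f(t) & 0 & -(2+\gamma)\end{pmatrix},\qquad b(t) = \begin{pmatrix} 0 \\ 0 \\ 2\lambda\bigl(I_f(t) - \vp_\infty M_f(t)\bigr)^2\end{pmatrix},
\]
and to exploit that, by the explicit solutions of \eqref{ODEmasses} and \eqref{ODEmomenta}, $M_f(t)\to M_{f_\infty}>0$ and $I_f(t)\to I_{f_\infty}$ exponentially fast as $t\to\infty$. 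First I would record two consequences: (i) $A(t)\to A_\infty$ exponentially, where $A_\infty$ denotes the constant matrix obtained by replacing $M_f(t)$ with $M_{f_\infty}$, and $A(t)$ is bounded, so $y$ exists on $[0,\infty)$; (ii) since $\vp_\infty = I_{f_\infty}/M_{f_\infty}$ by \eqref{phiinfty}, one has $I_f(t) - \vp_\infty M_f(t) = (I_f(t)-I_{f_\infty}) - \vp_\infty(M_f(t)-M_{f_\infty})\to 0$ exponentially, so $\|b(t)\|\le C e^{-2\nu t}$ for some $\nu>0$.

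The crucial step is to show that $A_\infty$ is a Hurwitz matrix. Writing $a:=\lambda M_{f_\infty}>0$ and expanding the determinant along the first row, the characteristic polynomial factors as
\[
\det(x\,\mathrm{Id}-A_\infty) = (x+\gamma)\,\bigl(x^2 + (2+\gamma+2a)\,x + 4a\bigr).
\]
The first factor gives the eigenvalue $-\gamma<0$. The quadratic factor has strictly positive coefficients, so its two roots either form a complex-conjugate pair with real part $-(2+\gamma+2a)/2<0$, or are both real and negative (their sum being $-(2+\gamma+2a)<0$ and their product $4a>0$). Hence $A_\infty$ is Hurwitz, and there is a symmetric positive definite matrix $P$ solving the Lyapunov equation $A_\infty^\top P + P A_\infty = -\mathrm{Id}$.

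Finally, I would run a Lyapunov argument on $W(t):=y(t)^\top P\,y(t)$. Writing $\dot y = A_\infty y + (A(t)-A_\infty)y + b(t)$ gives
\[
\dot W = -\|y\|^2 + y^\top\bigl((A(t)-A_\infty)^\top P + P(A(t)-A_\infty)\bigr)y + 2\,y^\top P\,b(t) \le -\|y\|^2 + C e^{-\nu t}\|y\|^2 + C e^{-\nu t}\|y\|
\]
by (i) and (ii). Choosing $T_0$ so large that $C e^{-\nu T_0}\le \tfrac12$, and using $W\simeq\|y\|^2$, one obtains for $t\ge T_0$ an inequality of the form $\tfrac{d}{dt}\sqrt{W}\le -c_1\sqrt{W} + c_2 e^{-\nu t}$ with $c_1>0$, whence $\sqrt{W(t)}\le C e^{-\mu t}$ for any $\mu\in(0,\min\{c_1,\nu\})$ by a Gr\"onwall/comparison estimate. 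Since $V_f,V_g,\bar V_g\ge 0$ and $W\simeq\|y\|^2$, this yields $V_f(t)+V_g(t)+\bar V_g(t)\le C e^{-\mu t}$ for $t\ge T_0$; as $y$ is continuous, hence bounded, on $[0,T_0]$, the bound extends to all $t\ge0$ after enlarging $C$.

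I expect the only genuinely delicate point to be the Hurwitz check for $A_\infty$: this is where the specific coupling structure of \eqref{ODEvariances} enters, and it is fortunate that the determinant factors so that only the signs of the coefficients of the remaining quadratic matter. Everything else is the standard asymptotically-autonomous perturbation argument; a slight alternative to the Lyapunov step is to represent $y$ by the variation-of-constants formula around $A_\infty$ and close a Gr\"onwall estimate using $\|e^{A_\infty t}\|\le C e^{-\mu_0 t}$ together with the exponential smallness of $A(t)-A_\infty$ and of $b(t)$.
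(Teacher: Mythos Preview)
Your proposal is correct and follows essentially the same approach as the paper: identify \eqref{ODEvariances} as a linear non-autonomous system whose coefficient matrix and inhomogeneity converge exponentially, verify that the limiting matrix $A_\infty$ is Hurwitz, and then invoke standard asymptotic stability results for ODEs. The only cosmetic difference is that the paper cites the Routh--Hurwitz criterion for the cubic, whereas you explicitly factor the characteristic polynomial as $(x+\gamma)\bigl(x^2+(2+\gamma+2a)x+4a\bigr)$ and read off the signs directly---a slightly cleaner route to the same conclusion.
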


\begin{proof}
From the explicit solutions of \eqref{ODEmasses}, \eqref{ODEmomenta} we deduce exponential convergence of the coefficient matrix and of the inhomogeneity 
of the linear system \eqref{ODEvariances}. The limit of the inhomogeneity vanishes by \eqref{phiinfty}, and the limit of the coefficient matrix can be shown to 
have eigenvalues with negative real parts by the Routh-Hurwitz criterion \cite{Hurwitz} (Here it is used that $M_{f_\infty} > 0$ by $M>0$).
The lemma then follows from standard results for ODE systems.
\end{proof}

\paragraph{Equilibria:}
By the decay of $V_f$ we expect $f$ to converge to a Delta-distribution as $t\to\infty$. The same is true for the one-particle marginal $\int_{\R}g\,\md\vpa$ of $g$.
The observation 
$$
    \int_{\R^2} (\vp-\vp_\infty)(\vpa-\vp_\infty) g\,\md\vpa\md\vp = V_g - \frac{1}{2} \bar V_g
$$
shows that the correlation between particle pairs in collisions tends to zero. Therefore we expect convergence to the equilibrium distributions
\begin{align}\label{globeq}
  f_{\infty}(\vp) =  M_{f_\infty}\delta(\vp-\vp_{\infty}) \,,\qquad g_{\infty}(\vp,\vpa) = M_{g_\infty} \delta(\vp-\vp_{\infty})\delta(\vpa-\vp_{\infty}) \,.
\end{align}

\paragraph{Entropy:} 
We introduce an entropy functional adapted to the exchange terms between collisional and non-collisional states:
\begin{equation}\label{entr}
     \mathcal{H}[f,g] := \int_{\R} f(\log(\lambda f) - 1)\md\vp + \int_{\R^2} g \left( \log(\gamma g) - 1 \right)\md\vpa\md\vp \,,
\end{equation}
whose time derivative along solutions of \eqref{mainmodel} is given by
\begin{equation}\label{entr-diss}
   \frac{\md}{\md t} \mathcal{H}[f,g] = -\int_{\R^2} (\lambda f\fa - \gamma g)\log\left( \frac{\lambda f\fa}{\gamma g}\right)\md\vpa\md\vp + M_g \,.
\end{equation}
Because of the appearance of the positive term coming from the drift in the $g$-equation, this is not useful for the analysis of the long-time behaviour.
However, for finite times it provides a $(L\log L)$-bound, which will be convenient in the analysis of the instantaneous limit.

\section{Existence, uniqueness, and convergence to equilibrium for the Stochastic Collision Time Model}\label{c5:ex_prob}

\paragraph{Global existence and uniqueness:} With the semigroup
\begin{equation}\label{semigroup}
   (S(t)g)(\vp,\vpa) = 
    e^{(1-\gamma)t} g\left( \Phi^{-t}(\vp,\vpa), \Phi_*^{-t}(\vp,\vpa)\right) \,,
\end{equation}
generated by the operator $Gg = -\nabla\cdot(v_1 g)-\gamma g = -v_1\cdot\nabla g +(1-\gamma)g$, and with
$$
    F(s,t) = \exp\left( -2\lambda \int_s^t M_f(s)ds\right) \,,
$$
we obtain the mild formulation
\begin{equation}\label{mild}
	\begin{split}
		&f(\vp,t)  = F(0,t) f_0(\vp) + 2\gamma\int_0^t F(s,t)\int_{\R} g(\vp,\vpa,s) \:\md\vpa \:\md s  \,,\\
		&g(\vp,\vpa,t) = (S(t)g_0)(\vp,\vpa) + \lambda \int_0^t (S(t-s)f(\cdot,s)\fa(\cdot,s))(\vp,\vpa) \: \md s \,,
	\end{split}
\end{equation}
of the initial value problem \eqref{mainmodel}, \eqref{IC}. After having solved problem \eqref{ODEmasses}, we may consider $M_f$ and therefore also 
$F$ as given.

\begin{theorem}\label{thm:ex_un}
	Let $f_0 \in L_+^1(\R)$, $g_0 \in L_+^1(\R^2)$. Then \eqref{mild} has a unique solution $(f,g) \in C\big([0,\infty);\: L_+^1(\R)\times L_+^1(\R^2)\big)$.
\end{theorem}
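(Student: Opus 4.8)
The plan is to set up a Banach fixed-point argument on the mild formulation \eqref{mild}, exploiting the fact that the semigroup $S(t)$ and the multiplier $F(s,t)$ are both nonnegative and mass-controlling. First I would observe that $S(t)$ is, up to the explicit scalar factor $e^{(1-\gamma)t}$, a measure-preserving change of variables on $L^1(\R^2)$: since the characteristic flow \eqref{coll-rule} has Jacobian $\det D\Phi^{-t} = e^t$ (the map $(\vp,\vpa)\mapsto(\Phi^{-t},\Phi_*^{-t})$ expands the $(\vp-\vpa)$-direction by $e^t$ and fixes the $(\vp+\vpa)$-direction), one gets $\|S(t)g\|_{L^1} = e^{(1-\gamma)t}e^{-t}\|g\|_{L^1} = e^{-\gamma t}\|g\|_{L^1}$, and $S(t)$ preserves nonnegativity. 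Likewise $0 \le F(s,t) \le 1$ because $M_f \ge 0$. So the right-hand side of \eqref{mild}, viewed as a map $\Psi$ on $X_T := C([0,T]; L^1(\R)\times L^1(\R^2))$, is built from order-preserving, $L^1$-bounded operations, and the only genuinely nonlinear/unbounded-looking piece is the product $f(\cdot,s)\fa(\cdot,s)$, i.e. the tensor $(f\otimes f)(\vp,\vpa) = f(\vp,s)f(\vpa,s)$, whose $L^1(\R^2)$-norm is $\|f(\cdot,s)\|_{L^1(\R)}^2$.

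Next I would run a standard contraction on a small time interval. On the ball $B_R = \{(f,g)\in X_T : \sup_{t\le T}(\|f\|_1 + \|g\|_1) \le R\}$ with $R$ chosen in terms of $\|f_0\|_1 + \|g_0\|_1$, the estimates
\begin{align*}
  \|f(\cdot,t)\|_1 &\le \|f_0\|_1 + 2\gamma T \sup_{s\le T}\|g(\cdot,s)\|_1 \,, \\
  \|g(\cdot,t)\|_1 &\le \|g_0\|_1 + \lambda T \sup_{s\le T}\|f(\cdot,s)\|_1^2 \,,
\end{align*}
show $\Psi$ maps $B_R$ into itself for $T$ small, and the difference estimate uses bilinearity of the tensor product, $\|f_1\otimes f_1 - f_2\otimes f_2\|_1 \le (\|f_1\|_1 + \|f_2\|_1)\|f_1 - f_2\|_1 \le 2R\,\|f_1-f_2\|_1$, to get a Lipschitz constant $\le C(R)\,T$ on $\Psi$, hence a contraction for $T = T(R)$ small. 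This yields a unique local mild solution in $X_T$.

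Then I would upgrade local to global by an a priori $L^1$ bound that prevents blow-up. Adding the two equations of \eqref{mild} (or better, going back to \eqref{mainmodel} and integrating) gives exactly the mass ODEs \eqref{ODEmasses}, so $\|f(\cdot,t)\|_1 = M_f(t)$ and $\|g(\cdot,t)\|_1 = M_g(t)$ for nonnegative solutions, and these are globally bounded — indeed $M_f + 2M_g \equiv M$ is conserved. Since the existence time $T(R)$ from the contraction depends only on $R$, and $R$ can be taken uniformly equal to $M$ for all time, the local solution extends to $[0,\infty)$ by the usual continuation argument. Nonnegativity of $f,g$ along the way is obtained either by noting that $\Psi$ preserves the positive cone (all ingredients — $F$, $S(t)$, the tensor product of nonnegative functions — are nonnegative) so the fixed point inherits it, or by a Duhamel/Gronwall comparison.

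The main obstacle I anticipate is bookkeeping rather than anything deep: one must be careful that the change-of-variables identity for $S(t)$ is applied correctly (the flow is contracting in forward time, so $\Phi^{-t}$, the pre-collisional map, is expanding, which is what makes the Jacobian factor combine with $e^{(1-\gamma)t}$ to give the dissipative $e^{-\gamma t}$), and that in \eqref{mild} the quantity $M_f$ inside $F$ has genuinely already been fixed by solving \eqref{ODEmasses} — so the $f$-equation is linear in $f$ given $g$, and the only true nonlinearity is the quadratic source in the $g$-equation. A secondary point worth a line is that continuity in time with values in $L^1$ follows because $S(t)$ is a $C_0$-semigroup on $L^1(\R^2)$ (strong continuity of the translation/dilation flow) and the Duhamel integrals are continuous in $t$; this is what gives the claimed regularity $(f,g)\in C([0,\infty); L^1_+(\R)\times L^1_+(\R^2))$.
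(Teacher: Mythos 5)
Your proposal is correct and follows essentially the same route as the paper: a Picard/Banach fixed-point argument on the mild formulation \eqref{mild}, using $0\le F\le 1$, the identity $\|S(t)g\|_{L^1}=e^{-\gamma t}\|g\|_{L^1}$ obtained from the Jacobian $e^t$ of the pre-collisional flow, bilinearity of the tensor product for the contraction, positivity preservation of the iteration, and the moment ODEs \eqref{ODEmasses} for the global-in-time $L^1$ bound. No substantive differences or gaps relative to the paper's proof.
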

\begin{proof}
Obviously, Picard iteration for \eqref{mild} preserves nonnegativity. For proving the contraction property, we use
\begin{eqnarray*}
  \int_{\R} \left| \int_0^t F(s,t)\int_{\R} (g(\vp,\vpa,s)-\tilde g(\vp,\vpa,s))\md\vpa \:\md s \right| \md\vp
  \le t \sup_{0<s<t} \|g(\cdot,\cdot,s) - \tilde g(\cdot,\cdot,s)\|_{L^1(\R^2)} 
\end{eqnarray*}
and
\begin{eqnarray*}
  &&\int_{\R^2} \left| \int_0^t S(t-s)(f\fa - \tilde f \tilde f_*)(\vp,\vpa) \: \md s \right| \md\vpa\md\vp \\
  &&\le \int_0^t e^{t-s} \int_{\R^2} \left| f(\Phi^{s-t}(\vp,\vpa),s) f(\Phi^{s-t}_*(\vp,\vpa),s) - \tilde f(\Phi^{s-t}(\vp,\vpa),s) \tilde f(\Phi^{s-t}_*(\vp,\vpa),s)\right|
             \md\vpa\md\vp\md s \\
  &&= \int_0^t  \left\| f(\cdot,s)\fa(\cdot,s) - \tilde f(\cdot,s) \tilde f_*(\cdot,s)\right\|_{L^1(\R^2)} \md s 
       \le 2t \sup_{s>0} M_f(s)  \sup_{0<s<t} \left\| f(\cdot,s)-\tilde f(\cdot,s)\right\|_{L^1(\R)} \,.
\end{eqnarray*}
For the first inequality we have used $e^{-\gamma(t-s)}\le 1$. The equation afterwards is due to the coordinate change 
$(\Phi^{s-t}(\vp,\vpa),\Phi^{s-t}_*(\vp,\vpa)) \to (\vp,\vpa)$. For the last inequality we have used $f\fa - \tilde f \tilde f_* = (f-\tilde f)\fa + \tilde f (\fa - \tilde f_*)$.

Since continuity with respect to time is obvious, we obtain local existence. The convergence of $(M_f(t),M_g(t))$ as $t\to\infty$ implies a global $L^1$-bound and therefore
global existence.
\end{proof}

\paragraph{Weak convergence to equilibrium:}

With the results on the moments the following convergence result is easily proved.

\begin{theorem}\label{c5:decay}
Let the initial data $(f_0,g_0)$ satisfy \eqref{IC-ass} (and therefore the assumptions of Theorem \ref{thm:ex_un}). Then the mild solution $(f(t),g(t))$ of
\eqref{mainmodel}, \eqref{IC} satisfies 
	\begin{align}\label{c5:weakconvergence}
		\lim_{t\to\infty}(f(t),g(t)) = \left(M_{f_{\infty}}\delta_{\vp_{\infty}},M_{g_{\infty}}\delta_{(\vp_{\infty},\vp_{\infty})}\right) \,,
	\end{align}
	in the sense of distributions, where  $M_{f_{\infty}}$, $M_{g_{\infty}}$, and $\vp_\infty$ are given by \eqref{M-infty}, \eqref{phiinfty}. 
\end{theorem}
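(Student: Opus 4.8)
The plan is to test the mild solution against an arbitrary $\psi\in C_c^\infty$ and reduce the claimed convergence entirely to the moment information already collected, in particular to the exponential decay of the variances provided by the Lemma of Section~\ref{sec:formal}.

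For the free-particle component, Taylor expansion about $\vp_\infty$ gives $\psi(\vp)=\psi(\vp_\infty)+\psi'(\vp_\infty)(\vp-\vp_\infty)+R_\psi(\vp)$ with $|R_\psi(\vp)|\le\frac12\|\psi''\|_\infty(\vp-\vp_\infty)^2$, hence
$$
  \int_\R f(\vp,t)\psi(\vp)\,\md\vp = \psi(\vp_\infty)M_f(t) + \psi'(\vp_\infty)\int_\R(\vp-\vp_\infty)f(\vp,t)\,\md\vp + \int_\R f(\vp,t)R_\psi(\vp)\,\md\vp\,.
$$
The first term tends to $\psi(\vp_\infty)M_{f_\infty}$ by the explicit solution of \eqref{ODEmasses}; the middle term is bounded in absolute value by $\|\psi'\|_\infty\sqrt{M_f(t)V_f(t)}$ (Cauchy--Schwarz) and hence vanishes as $t\to\infty$, since $M_f$ stays bounded and $V_f(t)\to0$; and the remainder is bounded by $\frac12\|\psi''\|_\infty V_f(t)\to0$. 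This yields $f(t)\to M_{f_\infty}\delta_{\vp_\infty}$ in $\mathcal{D}'(\R)$. For the pair component the argument is identical, now with a first-order Taylor expansion of $\psi\in C_c^\infty(\R^2)$ about $(\vp_\infty,\vp_\infty)$: the constant term gives $\psi(\vp_\infty,\vp_\infty)M_g(t)\to\psi(\vp_\infty,\vp_\infty)M_{g_\infty}$; the two linear terms are controlled, again via Cauchy--Schwarz, by $\|\nabla\psi\|_\infty\sqrt{M_g(t)V_g(t)}$, where we use the indistinguishability property $g(\vp,\vpa,t)=g(\vpa,\vp,t)$ to identify $\int_{\R^2}(\vpa-\vp_\infty)^2 g\,\md\vpa\,\md\vp$ with $V_g(t)$; and the quadratic remainder is bounded by a constant times $\int_{\R^2}\big((\vp-\vp_\infty)^2+(\vpa-\vp_\infty)^2\big)g\,\md\vpa\,\md\vp=2V_g(t)\to0$. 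Hence $g(t)\to M_{g_\infty}\delta_{(\vp_\infty,\vp_\infty)}$ in $\mathcal{D}'(\R^2)$.

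The one genuine subtlety — and the point I expect to be the main obstacle — is that this argument treats $V_f(t),V_g(t),\bar V_g(t)$ as the actual second moments $\int_\R(\vp-\vp_\infty)^2 f(\vp,t)\,\md\vp$, etc., of the mild solution, whereas the Lemma only asserts decay of the \emph{solution of the ODE system} \eqref{ODEvariances}, and Theorem~\ref{thm:ex_un} only provides an $L^1$ solution. I would therefore first establish, using the mild formulation \eqref{mild} together with a Gronwall estimate and the fact that the collision flow $\Phi^s,\Phi_*^s$ of \eqref{coll-rule} satisfies $|\Phi^s(\vp',\vpa')|+|\Phi_*^s(\vp',\vpa')|\le|\vp'|+|\vpa'|$, that under \eqref{IC-ass} the mild solution has finite second moments at every time; and then verify, by differentiation under the integral sign using \eqref{mainmodel} and the collision rules, that these genuine moments (and the zeroth- and first-order ones) indeed solve \eqref{ODEmasses}, \eqref{ODEmomenta}, \eqref{ODEvariances}, so that the Lemma applies to them and $V_f,V_g\to0$ holds for the true moments. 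Once this identification is in place the Taylor argument above closes the proof; no tightness estimate is required, since distributional convergence is tested only against $C_c^\infty$, although the exponential moment decay would in fact upgrade \eqref{c5:weakconvergence} to weak-$*$ convergence of finite measures.
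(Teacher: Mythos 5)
Your proposal is correct and follows essentially the same route as the paper: both reduce \eqref{c5:weakconvergence} to the convergence of $M_f,M_g$ and the decay of $V_f,V_g$ from the Lemma of Section~\ref{sec:formal}, via a Taylor/Lipschitz estimate of the test function about $\vp_\infty$ and Cauchy--Schwarz (the paper uses a first-order bound with $C_b^1$ test functions where you use a second-order expansion with $C_c^\infty$, an immaterial difference). Your additional remark that one must verify the second moments of the mild solution are finite and actually satisfy \eqref{ODEmasses}--\eqref{ODEvariances} is a legitimate point that the paper's proof leaves implicit.
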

\begin{proof}
For a test function $h_1 \in C^1_b(\R)$ we have
\begin{eqnarray*}
  && \left| \int_{\R} f(\vp,t)h_1(\vp)\md\vp - M_{f_\infty} h_1(\vp_\infty)\right| \\
 && =\left| \int_{\R} (f(\vp,t)(h_1(\vp)-h_1(\vp_\infty)) +  h_1(\vp_\infty)(f(\vp,t)- f_\infty(\vp)))\md\vp \right| \\ 
  &&\le \|h_1'\|_{L^\infty(\R)} \int_{\R} f(\vp,t)|\vp-\vp_\infty| \md\vp + \|h_1\|_{L^\infty(\R)} |M_f(t) - M_{f_\infty}| \\
  &&\le \|h_1'\|_{L^\infty(\R)}\sqrt{M_f(t) V_f(t)} + \|h_1\|_{L^\infty(\R)} |M_f(t) - M_{f_\infty}|  \,,
\end{eqnarray*}
where the Cauchy-Schwarz inequality has been used for the second estimate. By the results of the previous section, this completes the proof of the convergence of $f(t)$.
Analogously, for $h_2 \in C^1_b(\R^2)$ we have
\begin{eqnarray*}
  && \left| \int_{\R^2} g(\vp,\vpa,t)h_2(\vp,\vpa)\md\vpa\md\vp - M_{g_\infty} h_2(\vp_\infty,\vp_\infty)\right| \\
  &&\le \|\nabla h_2\|_{L^\infty(\R^2)} \int_{\R^2} g(\vp,\vpa,t)(|\vp-\vp_\infty| + |\vpa-\vp_\infty| )\md\vpa\md\vp + \|h_2\|_{L^\infty(\R^2)} |M_g(t) - M_{g_\infty}| \\
  &&\le 2 \|\nabla h_2\|_{L^\infty(\R^2)}\sqrt{M_g(t) V_g(t)} + \|h_2\|_{L^\infty(\R^2)} |M_g(t) - M_{g_\infty}|  \,,
\end{eqnarray*}
completing the proof.
\end{proof}

\section{The instantaneous limit for the Stochastic Collision Time Model}\label{sec:il1}

\paragraph{The formal limit:} Collisions are close to instantanteous if they proceed fast and last a short time. In this situation we expect the number of particle pairs involved in collisions to be
small. These observations motivate the rescaling
\begin{equation}\label{scaling}
   v_1 \to \frac{v_1}{\ve} \,,\quad \gamma \to \frac{\gamma}{\ve} \,,\quad g \to \ve g \,,
\end{equation}
with a small positive parameter $\ve$.
This results in the singularly perturbed system
\begin{equation}\label{model_eps}
	\begin{split}
		&\pa_t \feps = 2\Big(\gamma\int_{\R} \geps \: \md \vpa- \lambda M_{\feps} \feps \Big) \,, \\
		&\ve \pa_t \geps + \nabla \cdot(v_1 \geps) = \lambda \feps f_{\ve,\ast} -  \gamma \geps \,.
	\end{split}
\end{equation} 
We shall assume initial conditions respecting the rescaling in the sense that for the rescaled variables $(\feps,\geps)$ we still pose the initial conditions \eqref{IC} with 
$\ve$-independent initial data satisfying \eqref{IC-ass}. The formal limit $\ve \to 0$ 
\begin{equation}\label{c5:inst_model}
	\begin{split}
		&\pa_t f = 2\Big(\gamma \int_{\R} g \: \md \vpa- \lambda M_f f \Big), \\
		&\nabla \cdot(v_1 g) = \lambda f\af -  \gamma g,
	\end{split}
\end{equation} 
involves a quasi-stationary equation for $g$, which can be solved by  passing to the limit $\ve\to 0$ in the rescaled mild formulation \eqref{mild},
$$
  \geps(\vp,\vpa,t) = (S(t/\ve)g_0)(\vp,\vpa) +\lambda \int_0^{t/\ve} (S(s)f(\cdot,t-\ve s)\fa(\cdot,t-\ve s))(\vp,\vpa) \: \md s \,,
$$
giving
\begin{equation}\label{mild-limit}
    g(\cdot,\cdot,t) = \lambda \int_0^\infty S(s)f(\cdot,t)\fa(\cdot,t) \:\md s = \lambda \int_0^\infty e^{(1-\gamma)s} f(\Phi^{-s},t) f(\Phi^{-s}_*,t)\md s \,.
\end{equation}
We recall that $(\Phi^{-s}(\vp,\vpa),\Phi^{-s}_*(\vp,\vpa))$ is the pre-collisional state 
corresponding to the post-collisional state $(\vp,\vpa)$ after a collision of duration $s$. Finally, by substitution of \eqref{mild-limit} into the $f$-equation in \eqref{c5:inst_model}, we can write the limiting equation for $f$ in the standard kinetic form
\begin{equation}\label{f-kin}
   \pa_t f = Q_1(f,f) := \int_{\R} \int_0^\infty \sigma(s)(e^s f'f_*' - f\fa) \md s\:\md\vpa \,,
\end{equation}
with the abbreviations
$$
   f' = f(\vp',t) = f(\Phi^{-s}(\vp,\vpa),t) \,,\qquad f_*' = f(\vp_*',t) = f(\Phi^{-s}_*(\vp,\vpa),t) \,,\qquad\sigma(s) = 2\lambda\gamma e^{-\gamma s} \,.
$$
The factor $e^s$ in the gain term is the determinant of the Jacobian of the non-volume-preserving collision rules (as in the dissipative Boltzmann equation, see e.g. 
\cite{c5:toscani}).

A weak formulation of the collision operator is derived by using the symmetry $\vp\leftrightarrow\vpa$ and by the transformation to post-collisional states in the gain term:
\begin{align*}
    \int_{\R} Q_1(f,f)h\:\md\vp = \frac{1}{2}\int_{\R^2}\int_0^\infty \sigma(s)f\fa \Bigl( & h\left( \Phi^s(\vp,\vpa)\right) 
    + h\left( \Phi_*^s(\vp,\vpa)\right)  - h(\vp) - h(\vpa) \Bigr) \md s\:\md\vpa\md\vp
\end{align*}
The choices $h(\vp) = 1$ and $h(\vp)=\vp$ show that the conservation laws of the non-instantaneous model remain valid:
$$
     M(t) := \int_{\R} f(\vp,t)\md\vp = M(0) \,,\qquad I(t) := \int_{\R} \vp f(\vp,t)\md\vp = I(0) \,.
$$
Finally we choose $h(\vp) = (\vp - \vp_\infty)^2$ with $\vp_\infty = \frac{I}{M}$ and obtain
\begin{eqnarray*}
   \int_{\R} Q_1(f,f)(\vp-\vp_\infty)^2 \md\vp &=& - \frac{1}{4} \int_0^\infty \sigma(s)(1-e^{-2s})\md s \int_{\R^2} f\fa(\vp-\vpa)^2 \md\vpa\md\vp \\
   &=& - \frac{2\lambda M}{2+\gamma} \int_{\R} (\vp-\vp_\infty)^2 f\:\md\vp \,,
\end{eqnarray*}
implying exponential decay of the variance:
$$
    V(t) := \int_{R} (\vp-\vp_\infty)^2 f(\vp,t)\md\vp = \exp\left( - \frac{2\lambda M}{2+\gamma} t\right) V(0) \,.
$$
As in the non-instantaneous case, the solution concentrates as $t\to\infty$.

\begin{theorem}\label{f-kin-ex-un-dec}
Let the initial data satisfy \eqref{IC-ass}. Then \eqref{f-kin} with $f(t=0)=f_0$ has a unique solution in $C([0,\infty); L^1(\R))$, satisfying
$$
    \lim_{t\to\infty} f(t) = M_{f_0}\delta_{\vp_\infty} \,,
$$
in the sense of distributions with $\vp_\infty = I_{f_0}/M_{f_0}$.
\end{theorem}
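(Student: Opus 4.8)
The plan is to adapt the mild-formulation and fixed-point argument from the proof of Theorem \ref{thm:ex_un}; the one genuinely new ingredient is an $L^1$-bound for the gain part of $Q_1$, which is not automatic because the collision rules \eqref{coll-rule} are not volume preserving. Write $Q_1(f,f) = G^+(f,f) - 2\lambda M f$, with the conserved mass $M := \|f_0\|_{L^1(\R)}$ and the bilinear gain operator
\begin{equation*}
  G^+(\phi,\psi)(\vp) := \int_\R\int_0^\infty \sigma(s)\,e^s\,\phi\!\left(\Phi^{-s}(\vp,\vpa)\right)\psi\!\left(\Phi^{-s}_*(\vp,\vpa)\right)\md s\,\md\vpa \,.
\end{equation*}
Integrating the pointwise inequality $|G^+(\phi,\psi)|\le G^+(|\phi|,|\psi|)$ over $\vp$ and, for each fixed $s$, changing variables from the post-collisional pair $(\vp,\vpa)$ to the pre-collisional pair $\left(\Phi^{-s}(\vp,\vpa),\Phi^{-s}_*(\vp,\vpa)\right)$ as in the proof of Theorem \ref{thm:ex_un} --- the associated Jacobian factor $e^{-s}$ exactly absorbs the weight $e^s$ --- one obtains
\begin{equation*}
   \|G^+(\phi,\psi)\|_{L^1(\R)} \le \Bigl(\int_0^\infty\sigma(s)\,\md s\Bigr)\|\phi\|_{L^1(\R)}\|\psi\|_{L^1(\R)} = 2\lambda\,\|\phi\|_{L^1(\R)}\,\|\psi\|_{L^1(\R)} \,.
\end{equation*}
Hence $G^+$, and with it $Q_1(f,f)$, is bounded and (being quadratic) locally Lipschitz on $L^1(\R)$.

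Given this estimate, existence and uniqueness follow as for Theorem \ref{thm:ex_un}: Picard iteration on the mild form
\begin{equation*}
   f(\vp,t) = e^{-2\lambda M t}f_0(\vp) + \int_0^t e^{-2\lambda M(t-s)}\,G^+\!\left(f(\cdot,s),f(\cdot,s)\right)(\vp)\,\md s
\end{equation*}
preserves nonnegativity, since $G^+$ maps nonnegative functions to nonnegative functions, and it contracts on a ball of $C([0,T];L^1(\R))$ for $T$ depending only on $\|f_0\|_{L^1(\R)}$, which yields a unique nonnegative local mild solution. Choosing $h\equiv1$ in the weak formulation of $Q_1$ gives $\|f(t)\|_{L^1(\R)} = M$ throughout the interval of existence, so the local time step is uniform and the solution extends globally, exactly as in Theorem \ref{thm:ex_un}.

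For the long-time behaviour one makes rigorous, for this $L^1$-solution, the moment computations carried out formally above. Because $\int_\R(1+\vp^2)f_0\,\md\vp<\infty$, one first checks that $\int_\R\vp^2 f(\cdot,t)\,\md\vp$ stays finite and is in fact nonincreasing: with $h(\vp)=\vp^2$ the increment appearing in the weak formulation is $\Phi^s(\vp,\vpa)^2+\Phi^s_*(\vp,\vpa)^2-\vp^2-\vpa^2 = -\tfrac12(1-e^{-2s})(\vp-\vpa)^2\le0$, and this can be justified on the Picard iterates (or by truncating $h$ and passing to the limit). Finite second moments then legitimize the choices $h=\vp$ and $h=(\vp-\vp_\infty)^2$ with $\vp_\infty = I_{f_0}/M_{f_0}$, giving $I(t) = I_{f_0}$ and $V(t) = e^{-2\lambda M t/(2+\gamma)}V(0)$ as computed above. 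Testing against $h_1\in C^1_b(\R)$ and using the Cauchy--Schwarz bound $\int_\R f(\cdot,t)|\vp-\vp_\infty|\,\md\vp\le\sqrt{M\,V(t)}\to0$ then yields the claimed distributional convergence $f(t)\to M_{f_0}\delta_{\vp_\infty}$, word for word as in the proof of Theorem \ref{c5:decay}.

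The main obstacle is the first step: a priori the gain term of $Q_1$ need not even define an $L^1$-function for $f\in L^1(\R)$, and a crude estimate loses the factor $e^s$; it is the change of variables to pre-collisional coordinates, together with the integrability of $\sigma$, that closes the argument. Everything after that is a routine transcription of Theorems \ref{thm:ex_un} and \ref{c5:decay}, the only mildly delicate point being the propagation of the second moment needed to justify the variance identity.
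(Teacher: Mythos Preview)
Your proposal is correct and follows exactly the route the paper indicates: the paper omits the proof entirely, stating only that it is ``very similar to the proofs of Theorems \ref{thm:ex_un} and \ref{c5:decay},'' and your argument is precisely a careful execution of that plan. The one genuinely new point you single out---the $L^1$-bound for the gain term via the change to pre-collisional variables, with the Jacobian $e^{-s}$ cancelling the weight $e^s$---is the only place where the transcription is not entirely mechanical, and you handle it correctly; the remaining steps (contraction on $C([0,T];L^1)$, mass conservation for global existence, propagation of the second moment, and the Cauchy--Schwarz estimate against $C_b^1$ test functions) are indeed routine adaptations of Theorems \ref{thm:ex_un} and \ref{c5:decay}.
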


The proof is very similar to the proofs of Theorems \ref{thm:ex_un} and \ref{c5:decay} and therefore omitted.

\paragraph{The rigorous limit:}

We start by looking at the rescaled ODEs for the moments:
\begin{equation}\label{ODE-eps}
	\begin{split}
		\dot M_{\feps} &= 2\gamma M_{\geps} - 2\lambda M_{\feps}^2 \,,\qquad \ve\dot M_{\geps}  = \lambda M_{\feps}^2 - \gamma M_{\geps} \,,\\
		\dot I_{\feps} &= 2\gamma I_{\geps} - 2\lambda I_{\feps} M_{\feps} \,,\qquad \ve\dot I_{\geps} = \lambda I_{\feps} M_{\feps} - \gamma I_{\geps} \,,	\\
		\dot V_{\feps} &= 2\gamma V_{\geps} - 2\lambda M_{\feps} V_{\feps} \,,\qquad
		\ve\dot V_{\geps} = \lambda M_{\feps} V_{\feps} - \gamma V_{\geps} - \bar V_{\geps} \,,\\
		\ve\dot{\bar V}_{\geps} &= 2\lambda M_{\feps} V_{\feps} - (2+\gamma)\bar V_{\geps} + 2\lambda(I_{\feps} - \vp_{\infty,\ve} M_{\feps})^2 \,,
	\end{split}
\end{equation}
where 
$$
  \vp_{\infty,\ve} = \frac{I_{f_0} + 2\ve I_{g_0}}{M_{f_0} + 2\ve M_{g_0}} = \vp_\infty + O(\ve) \,,\qquad \text{with }\vp_\infty =  \frac{I_{f_0}}{M_{f_0}}\,,
$$
is used in the definition of the variances $V_{\feps}$ and $V_{\geps}$.

Standard results \cite{Fenichel} of the theory for singularly perturbed ODEs apply to these three systems. In the language of 
singular perturbation theory, the moments of $g$ are \emph{fast variables} and the moments of $f$ \emph{slow variables}. In an \emph{initial layer} of 
$O(\ve)$-length the slow variables remain approximately constant whereas the dynamics of the fast variables in terms of the \emph{layer variable} $\tau = t/\ve$
is approximately governed by the \emph{layer equations}
\begin{align*}
   &\frac{\md \hat M_g}{\md\tau} = \lambda M_{f_0}^2 - \gamma \hat M_g \,,\qquad
	\frac{\md \hat I_g}{\md\tau} = \lambda I_{f_0} M_{f_0} - \gamma \hat I_g \,,\\
   &\frac{\md \hat V_g}{\md\tau} = \lambda M_{f_0} V_{f_0} - \gamma \hat V_g - \hat{\bar V}_g \,,\qquad
	\frac{\md\hat{\bar V}_g}{\md\tau} = 2\lambda M_{f_0} V_{f_0} - (2+\gamma)\hat{\bar V}_g + 2\lambda(I_{f_0} - \vp_\infty M_{f_0})^2 \,.
\end{align*}
The important property of this system is its stability: As $\tau\to\infty$ the solution $(\hat M_g,\hat I_g,\hat V_g,\hat{\bar V}_g)$ converges exponentially 
to its steady state. Away from the initial layer the solution can be approximated by the solution of the \emph{reduced system}, obtained by setting $\ve=0$
in \eqref{ODE-eps}. After elimination of the fast variables by the algebraic equations, this becomes an ODE system for the moments of $f$, whose solutions
converge as $t\to\infty$. It is the main result of \cite{Fenichel} that these formal approximations are uniformly valid. In the context of the present work, we only need
a simple immediate consequence:

\begin{lemma}\label{moments-eps}
Let the initial data satisfy \eqref{IC-ass}. Then, for any $\ve_0>0$, the solution 
$$(M_{\feps},M_{\geps},I_{\feps},I_{\geps},V_{\feps},V_{\geps},\bar V_{\geps})(t)$$ 
of the initial value problem for \eqref{ODE-eps} is bounded uniformly in $t\ge 0$ and $0<\ve\le \ve_0$.
\end{lemma}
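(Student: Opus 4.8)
The plan is to obtain the uniform bound directly from the structure of the seven-dimensional linear-in-$(V_{\feps},V_{\geps},\bar V_{\geps})$ / quadratic-in-$(M_{\feps},I_{\feps})$ system \eqref{ODE-eps}, without invoking the full Fenichel machinery, since we only need boundedness. First I would treat the three subsystems in cascade. For the mass subsystem, note that $M := M_{\feps} + 2\ve M_{\geps}$ is conserved (add the two mass equations after multiplying the second by $2$), so $M(t) \equiv M_{f_0} + 2\ve M_{g_0} \le M_{f_0} + 2\ve_0 M_{g_0} =: \bar M$. Since $M_{\feps},M_{\geps}\ge 0$ (nonnegativity is preserved by the dynamics, or can be read off from the mild formulation \eqref{mild}), this gives $0\le M_{\feps}(t)\le \bar M$ and $0\le M_{\geps}(t) \le \bar M/(2\ve)$; the latter is not yet $\ve$-uniform, so I would additionally use the $M_{\geps}$-equation $\ve\dot M_{\geps} = \lambda M_{\feps}^2 - \gamma M_{\geps} \le \lambda\bar M^2 - \gamma M_{\geps}$, which by a standard comparison (Grönwall) argument yields $M_{\geps}(t) \le \max\{M_{g_0},\,\lambda\bar M^2/\gamma\}$, a bound independent of $\ve$ and $t$.

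Next, for the first-order moments, the analogous combination $I := I_{\feps} + 2\ve I_{\geps}$ is conserved, so $|I_{\feps} + 2\ve I_{\geps}| \le |I_{f_0}| + 2\ve_0|I_{g_0}| =: \bar I$. Here the moments are not sign-definite, so I would instead bound $|I_{\feps}|$ via the Cauchy–Schwarz-type estimate $|I_{\feps}| \le \sqrt{M_{\feps}}\sqrt{\int \vp^2 f_\ve}$; but it is cleaner to observe that $\frac{d}{dt}(I_{\feps}^2) = 2 I_{\feps}(2\gamma I_{\geps} - 2\lambda I_{\feps} M_{\feps}) \le 4\gamma |I_{\feps}||I_{\geps}| \le 4\gamma |I_{\feps}| \cdot \frac{\bar I + |I_{\feps}|}{2\ve}$, which still carries an $\ve$. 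The robust route is: from $\ve\dot I_{\geps} = \lambda I_{\feps}M_{\feps} - \gamma I_{\geps}$ and the slow equation $\dot I_{\feps} = 2\gamma I_{\geps} - 2\lambda I_{\feps} M_{\feps}$, form $\frac{d}{dt}(I_{\feps} + 2\ve I_{\geps}) = 0$ (already known) and $\frac{d}{dt}(I_{\feps}^2 + 2\ve I_{\geps}^2)$; using $0\le M_{\feps}\le\bar M$ one checks the cross terms cancel favourably and the quadratic form $I_{\feps}^2 + 2\ve I_{\geps}^2$ satisfies a differential inequality $\le C(I_{\feps}^2 + 2\ve I_{\geps}^2)$ with $C$ independent of $\ve$ (because all coefficients are bounded uniformly), giving exponential-in-time — hence on any finite interval — control, and then the conservation law closes the argument for all time since $I_{\feps}$ stays in $[-\bar I,\bar I]$ and feeds back into a dissipative estimate for $\ve I_{\geps}^2$.

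With $M_{\feps},M_{\geps},I_{\feps}$ (and hence the inhomogeneity $(I_{\feps} - \vp_{\infty,\ve}M_{\feps})^2$, since $\vp_{\infty,\ve}$ is bounded and $I_{\feps},M_{\feps}$ are) all bounded uniformly in $\ve\in(0,\ve_0]$ and $t\ge 0$, the variance subsystem becomes a three-dimensional \emph{linear} system $\ve$-singularly perturbed in $(V_{\geps},\bar V_{\geps})$ with uniformly bounded coefficients and a uniformly bounded nonnegative forcing. Nonnegativity of $V_{\feps},V_{\geps},\bar V_{\geps}$ again follows from the probabilistic/mild-formulation interpretation. The key computation here is exactly the one already highlighted in the excerpt: the total variance $V_\ve := V_{\feps} + 2\ve V_{\geps}$ satisfies $\dot V_\ve = -2\ve\bar V_{\geps} - \gamma \cdot 0 + \ldots$ — more precisely, combining the three equations with weights $(1,2\ve,0)$ kills the $2\gamma V_{\geps}$ / $\lambda M_{\feps}V_{\feps}$ exchange terms, leaving $\dot V_\ve = -2\ve\bar V_{\geps}\le 0$, so $V_\ve(t)\le V_\ve(0) = V_{f_0} + 2\ve V_{g_0}\le V_{f_0} + 2\ve_0 V_{g_0}$. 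This bounds $V_{\feps}$ directly. For $\bar V_{\geps}$ I would use its own equation $\ve\dot{\bar V}_{\geps} = 2\lambda M_{\feps}V_{\feps} - (2+\gamma)\bar V_{\geps} + (\text{bdd})\le (\text{bdd}) - (2+\gamma)\bar V_{\geps}$ and comparison, giving $\bar V_{\geps}\le\max\{\bar V_{g_0}, (\text{bdd})/(2+\gamma)\}$ uniformly; and finally the $V_{\geps}$-equation $\ve\dot V_{\geps} = \lambda M_{\feps}V_{\feps} - \gamma V_{\geps} - \bar V_{\geps}\le (\text{bdd}) - \gamma V_{\geps}$ (dropping the nonnegative $-\bar V_{\geps}$) gives the same for $V_{\geps}$.

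The main obstacle, and the reason this is more than a one-line remark, is that the raw a priori bounds coming from conservation laws only control $M_{\feps} + 2\ve M_{\geps}$ and $V_{\feps} + 2\ve V_{\geps}$, i.e. they leave a factor $1/\ve$ in front of the fast variables; the real content is that the fast equations are \emph{dissipative} (the coefficients $-\gamma$, $-(2+\gamma)$ on the right-hand sides of the $\ve\dot{(\cdot)}$ equations are negative and $\ve$-independent), so a comparison argument against the ODE $\ve\dot y = A - \gamma y$ — whose bounded steady state $A/\gamma$ and whose attraction rate $\gamma/\ve$ is fast but whose \emph{bound} $\max\{y(0),A/\gamma\}$ is $\ve$-independent — upgrades each conservation-law bound to a genuinely $\ve$-uniform one. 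I would make sure to spell out this comparison lemma once and then apply it three times (to $M_{\geps}$, $\bar V_{\geps}$, $V_{\geps}$), and to be careful that in each case the forcing $A$ has already been bounded uniformly at the previous stage of the cascade, so there is no circularity; the intermediate moments $I_{\feps},I_{\geps}$, being sign-indefinite, require the slightly less transparent quadratic-functional version sketched above, which is where I would spend most of the actual writing.
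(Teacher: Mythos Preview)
Your approach is correct in spirit and genuinely different from the paper's. The paper does not carry out any computation: it simply observes that the layer equations are exponentially stable, that the reduced system has convergent (hence bounded) solutions, and then invokes Fenichel's geometric singular perturbation theorem to conclude that the formal approximation is uniformly valid, from which uniform boundedness is an immediate corollary. Your route is more elementary and self-contained: you exploit the conservation laws for the slow variables together with the explicit dissipation in the fast equations (the $-\gamma$ and $-(2+\gamma)$ coefficients) via a scalar comparison lemma of the form $\ve\dot y\le A-\gamma y\Rightarrow y(t)\le\max\{y(0),A/\gamma\}$. This avoids checking the normal-hyperbolicity hypotheses of Fenichel for the nonlinear mass subsystem and yields the bound directly.

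Two points to clean up. First, a slip: combining the variance equations with weights $(1,2,0)$ (multiply the $\ve\dot V_{\geps}$ equation by $2$) gives $\dot V_\ve=-2\bar V_{\geps}$, not $-2\ve\bar V_{\geps}$; the conclusion is unchanged. Second, your treatment of the sign-indefinite first moments $I_{\feps},I_{\geps}$ is the weakest link as written: the quadratic functional $I_{\feps}^2+2\ve I_{\geps}^2$ does not obviously satisfy an $\ve$-uniform differential inequality, and the claim that ``$I_{\feps}$ stays in $[-\bar I,\bar I]$'' does not follow from the conservation of $I_{\feps}+2\ve I_{\geps}$ alone. A much cleaner fix is to \emph{reorder the cascade}: bound the variances before the first moments. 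The only place $I_{\feps}$ enters the variance system is through the inhomogeneity $2\lambda(I_{\feps}-\vp_{\infty,\ve}M_{\feps})^2$, and Cauchy--Schwarz gives $(I_{\feps}-\vp_{\infty,\ve}M_{\feps})^2=\bigl(\int_{\R}(\vp-\vp_{\infty,\ve})\feps\,\md\vp\bigr)^2\le M_{\feps}V_{\feps}$, which is already controlled at that stage. Once $M_{\feps},M_{\geps},V_{\feps},V_{\geps}$ are bounded, the same Cauchy--Schwarz estimate (and its analogue $|I_{\geps}-\vp_{\infty,\ve}M_{\geps}|\le\sqrt{M_{\geps}V_{\geps}}$) yields the $I$-bounds for free, and no quadratic-functional argument is needed.
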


These uniform bounds for the moments are the essential prerequisite for the rigorous instantaneous limit.

\begin{theorem}
	Let the initial data satisfy \eqref{IC-ass}, $f_0\log f_0\in L^1(\R)$, and $g_0\log g_0 \in L^1(\R^2)$. Then the solution $(\feps,\geps)$ of \eqref{IC}, \eqref{model_eps} satisfies
	\begin{align}\label{c5:convergence}
	        \begin{split}
		&\lim_{\ve\to 0} \feps(\cdot,t) = f(\cdot,t) \quad\text{weakly in $L^1(\R)$, locally uniformly in }t\in [0,\infty) \,,\\
		&\lim_{\ve\to 0} \geps(\cdot,t) = g(\cdot,t) \quad\text{tightly, locally uniformly in }t\in (0,\infty) \,,
		\end{split}
	\end{align}
where $f$ is the solution of \eqref{f-kin} with $f(t=0)=f_0$ and $g$ is given by \eqref{mild-limit}. 
\end{theorem}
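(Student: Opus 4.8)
The plan is to derive uniform-in-$\ve$ a priori bounds, extract a limit of $\feps$ by a compactness argument in time, read off the limit of $\geps$ directly from its mild formula, and identify the limit by the uniqueness statement of Theorem~\ref{f-kin-ex-un-dec}. By Lemma~\ref{moments-eps} all the moments of $(\feps,\geps)$ appearing there are bounded uniformly in $t\ge0$ and $\ve\in(0,\ve_0]$; in particular $0\le F_\ve(s,t)=\exp(-2\lambda\int_s^t M_{\feps})\le1$. Computing the time derivative of $\int_\R\feps(\log(\lambda\feps)-1)\,\md\vp+\ve\int_{\R^2}\geps(\log(\gamma\geps)-1)\,\md\vpa\,\md\vp$ as in \eqref{entr-diss} produces a nonpositive collisional term together with terms controlled by the moment bounds; combined with $f_0\log f_0\in L^1(\R)$, $g_0\log g_0\in L^1(\R^2)$ and the classical lower bound for $\int g\log g$ by mass and second moment, this yields a bound on $\int_\R\feps|\log\feps|\,\md\vp$ uniform in $\ve$ and locally uniform in $t$. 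Hence, by the de la Vall\'ee-Poussin criterion and the uniform second-moment bound, $\{\feps(\cdot,t)\}_\ve$ is for each $t$ weakly relatively compact in $L^1(\R)$ and uniformly tight. The first equation of \eqref{model_eps} also gives $\|\pa_t\feps(\cdot,t)\|_{L^1(\R)}\le2\gamma M_{\geps}(t)+2\lambda M_{\feps}(t)^2\le C$, so $\{\feps\}$ is equi-Lipschitz from $[0,T]$ into $L^1(\R)$ for every $T$. Metrizing the weak topology on the fixed weakly compact tight set in which the $\feps(\cdot,t)$ lie, Arzel\`a--Ascoli provides a subsequence $\ve_k\to0$ and a limit $\bar f$, continuous from $[0,\infty)$ to $L^1(\R)$ with its weak topology, with $\feps(\cdot,t)\rightharpoonup\bar f(\cdot,t)$ weakly in $L^1(\R)$ locally uniformly in $t$; in particular $M_{\feps}\to\bar M_f=\int_\R\bar f\,\md\vp$ in $C([0,T])$ and $F_\ve\to\bar F$ uniformly on $\{0\le s\le t\le T\}$.

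Next I pass to the limit in the rescaled mild formula $\geps(\cdot,\cdot,s)=S(s/\ve)g_0+\lambda\int_0^{s/\ve}S(r)(\feps(\cdot,s-\ve r)\,f_{\ve,\ast}(\cdot,s-\ve r))\,\md r$. The first term has $L^1(\R^2)$-norm $e^{-\gamma s/\ve}M_{g_0}\to0$, uniformly for $s\ge\delta>0$; this $O(\ve)$ initial layer is exactly why the statement for $\geps$ excludes $t=0$. In the integral, $\|S(r)(\feps\,f_{\ve,\ast})(\cdot,s-\ve r)\|_{L^1(\R^2)}=e^{-\gamma r}M_{\feps}(s-\ve r)^2\le Ce^{-\gamma r}$ is an $r$-integrable dominating function, so by dominated convergence it suffices that $S(r)(\feps\,f_{\ve,\ast})(\cdot,s-\ve r)\rightharpoonup S(r)(\bar f(\cdot,s)\,\bar f_*(\cdot,s))$ weakly in $L^1(\R^2)$ for a.e.\ $r$. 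Since $s-\ve_k r\to s$, the locally uniform weak convergence and the time-continuity of $\bar f$ give $\feps(\cdot,s-\ve_k r)\rightharpoonup\bar f(\cdot,s)$; the corresponding tensor squares $\feps\otimes\feps$ are equi-integrable in $L^1(\R^2)$ --- a consequence of the uniform $L\log L$ bound on $\feps$, since $\int_{\R^2}(\feps\otimes\feps)\log^+(\feps\otimes\feps)\le2M_{\feps}\int_\R\feps\log^+\feps$ --- and tight, hence weakly relatively compact, and any weak limit agrees with $\bar f(\cdot,s)\otimes\bar f(\cdot,s)$ on the separating family of product test functions; finally $S(r)$ is a bounded, hence weak-weak continuous, operator on $L^1(\R^2)$. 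This gives $\geps(\cdot,\cdot,s)\rightharpoonup\bar g(\cdot,\cdot,s):=\lambda\int_0^\infty S(r)(\bar f(\cdot,s)\,\bar f_*(\cdot,s))\,\md r$ weakly in $L^1(\R^2)$, locally uniformly in $s>0$, which is precisely \eqref{mild-limit} built from $\bar f$.

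Passing to the limit along $\ve_k$ in the rescaled mild equation $\feps(\vp,t)=F_\ve(0,t)f_0(\vp)+2\gamma\int_0^tF_\ve(s,t)\int_\R\geps(\vp,\vpa,s)\,\md\vpa\,\md s$ --- using $F_\ve\to\bar F$ uniformly, the weak $L^1(\R)$ convergence of the $\vpa$-marginal of $\geps$ for $s>0$, and the uniform domination $M_{\geps}(s)\le C$ of its mass --- shows $\bar f(\vp,t)=\bar F(0,t)f_0(\vp)+2\gamma\int_0^t\bar F(s,t)\int_\R\bar g(\vp,\vpa,s)\,\md\vpa\,\md s$. Since $\bar g$ is \eqref{mild-limit} with $\bar f$, the quantity $2\gamma\int_\R\bar g\,\md\vpa$ equals the gain part of $Q_1(\bar f,\bar f)$ in \eqref{f-kin}, so $\bar f$ is a mild solution of \eqref{f-kin} with datum $f_0$. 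By the uniqueness in Theorem~\ref{f-kin-ex-un-dec}, $\bar f=f$; the limit being independent of the subsequence, the whole family $(\feps,\geps)$ converges, with $\bar g=g$ as in \eqref{mild-limit}. This establishes the first line of \eqref{c5:convergence}; the second follows from the $\geps$-convergence above, the weak $L^1$ convergence being upgraded to tight convergence by the uniform second-moment bound of Lemma~\ref{moments-eps}.

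The heart of the argument --- and the main obstacle --- is the passage to the limit in the quadratic gain term: weak $L^1$ convergence is not stable under products, and what makes it work is the tensor structure $\feps\otimes\feps$ together with the uniform $L\log L$-equi-integrability and the separating property of product test functions, combined with the exponentially decaying weight $e^{-\gamma r}$ of $S(r)$ that makes the $r$-integral uniformly convergent. The remaining points are more technical: upgrading pointwise-in-$t$ weak convergence to locally uniform convergence (via the equi-Lipschitz bound on $\feps$ and Arzel\`a--Ascoli in the metrized weak topology), and identifying and handling the $O(\ve)$ initial layer of $\geps$, which is what forces the restriction $t>0$ in the second statement of \eqref{c5:convergence}.
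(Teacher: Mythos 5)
Your overall strategy coincides with the paper's: the rescaled entropy inequality plus the moment bounds of Lemma \ref{moments-eps} give a uniform $L\log L$ bound on $\feps$, Dunford--Pettis and the $\|\pa_t\feps\|_{L^1}$ estimate \eqref{dtf-est} give locally uniform weak $L^1$ convergence of a subsequence of $\feps$, the limit of $\geps$ is read off from the rescaled mild formulation \eqref{mild-eps}, and uniqueness (Theorem \ref{f-kin-ex-un-dec}) removes the subsequence. Your treatment of the quadratic term is a legitimate variant: where the paper works with tight convergence of measures and invokes the product-measure convergence theorem of Billingsley, you stay in $L^1$, observe that $\feps\otimes\feps$ inherits equi-integrability from $\feps$ via $\log^+(ab)\le\log^+a+\log^+b$, and identify the weak limit on the separating family of product test functions. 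This even yields weak $L^1$ convergence of $\geps(\cdot,\cdot,t)$ for fixed $t>0$, slightly more than the tight convergence claimed; the paper deliberately avoids this route because it only tracks $\geps$ as a measure.

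There is, however, one under-justified step, and it concerns part of the stated conclusion: the \emph{local uniformity in $t$} of the convergence of $\geps$. Your dominated-convergence argument in the $r$-integral of the mild formula is pointwise in $s$; asserting that the resulting convergence is ``locally uniform in $s>0$'' does not follow from it. For product test functions $h_1\otimes h_2$ the uniformity is immediate from the locally uniform convergence of $\feps$, but for a general $h\in C_b(\R^2)$ (and a fortiori for $h\in L^\infty$, as needed for weak $L^1$) one must either approximate $h$ uniformly by sums of products on large compact squares, controlling the remainder by tightness uniformly in $t$, or --- as the paper does --- prove tight equicontinuity of $t\mapsto\geps(\cdot,\cdot,t)$ on $(\underline t,\infty)$ directly from the mild formulation (the estimates of $I_1,I_2,I_3$ there, which use the continuity modulus of $h$, the bound $ze^{-z}\le e^{-1}$, and again \eqref{dtf-est}) and then combine equicontinuity with pointwise convergence. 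Either repair works with the tools you already have, but as written the uniformity in $t$ for $\geps$ is asserted rather than proved; your closing paragraph flags the analogous upgrade only for $\feps$.
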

\begin{proof}
We recall the definition \eqref{entr} and the time derivative \eqref{entr-diss} of the entropy. For the rescaled problem we obtain
$$
    \frac{\md}{\md t} \left(\int_{\R} \feps(\log(\lambda \feps) - 1)\md\vp + \ve\int_{\R^2} \geps \left( \log(\gamma \geps) - 1 \right)\md\vpa\md\vp \right) \le M_{\geps} \,,
$$
which, by Lemma \ref{moments-eps}, implies boundedness of $\feps\log\feps$ in $L^1(\R)$ on bounded time intervals uniformly in $\ve$.

Due to the boundedness of the masses as well as the variances we see that for any bounded time interval $\{f_\ve\}_\ve$ and $\{g_\ve\}_\ve$ are \emph{tight sets} of measures. Due to the \emph{Prokhorov theorem} \cite{c5:P} this is equivalent to weak sequential compactness of $\{\feps\}_\ve$ and $\{\geps\}_\ve$ in the space of measures. For 
$\{\feps\}_\ve$ this can be improved by the entropy bound and the Dunford-Pettis theorem to weak sequential compactness in $L^1(\R\times (0,T))$ for any $T>0$.

A further improvement is the consequence of the estimate 
\begin{equation}\label{dtf-est}
    \|\pa_t \feps\|_{L^1(\R)} \le 2\gamma M_{\geps} + 2\lambda M_{\feps}^2 \,,
\end{equation}
implying, again with Lemma \ref{moments-eps}, uniform Lipschitz continuity of the map $t \mapsto \feps(\cdot,t)$ with respect to the $L^1(\R)$-topology. 
As a consequence there exists $f\in C([0,\infty); L^1(\R))$ such that a sequence $\{f_{\ve_n}\}$, with $\ve_n\to 0$,  converges to $f$ locally uniformly in $t\in [0,\infty)$ with respect 
to the weak topology in $L^1(\R)$.

The same cannot be expected for the fast variable $\geps$, where we also lack the information from the entropy. However, we consider the mild formulation
\begin{equation}\label{mild-eps}
  \geps(\vp,\vpa,t) = (S(t/\ve)g_0)(\vp,\vpa) + \frac{\lambda}{\ve} \int_0^t \left(S(s/\ve)\feps(\cdot,t-s) f_{\ve,\ast}(\cdot,t-s)\right)(\vp,\vpa) \: \md s \,,
\end{equation}
and, with a test function $h\in C_b(\R^2)$ and with $t_1\ge t_2\ge \underline t > 0$, use it in
\begin{align*}
   &\int_{\R^2} h(\vp,\vpa)(\geps(\vp,\vpa,t_1) - \geps(\vp,\vpa,t_2))\md\vpa\md\vp \\
   &= \int_{\R^2} \left(e^{-\gamma t_1/\ve}h(\Phi^{t_1/\ve}, \Phi_*^{t_1/\ve}) - e^{-\gamma t_2/\ve}h(\Phi^{t_2/\ve}, \Phi_*^{t_2/\ve})\right)g_0 \:\md\vpa\md\vp \\
   &\quad + \frac{\lambda}{\ve} \int_{t_2}^{t_1} e^{-\gamma s/\ve} \int_{\R^2} h(\Phi^{s/\ve}, \Phi_*^{s/\ve}) \feps(\cdot,t_1-s)f_{\ve,\ast}(\cdot,t_1-s)
   \md\vpa\md\vp\:\md s \\
    &\quad + \frac{\lambda}{\ve} \int_0^{t_2} e^{-\gamma s/\ve} \int_{\R^2} h(\Phi^{s/\ve}, \Phi_*^{s/\ve}) \left(\feps(\cdot,t_1-s)f_{\ve,\ast}(\cdot,t_1-s)
   - \feps(\cdot,t_2-s) f_{\ve,\ast}(\cdot,t_2-s) \right)\md\vpa\md\vp\:\md s  \\
   &=: I_1 + I_2 + I_3 \,.
 \end{align*}
 The three terms are estimated separately. Let $\psi$ be a continuity modulus of $h$, such that $|h(\vp,\vpa)-h(\tilde\vp,\tilde\vpa)|\le \psi(|\vp-\tilde\vp| + |\vpa-\tilde\vpa|)$.
 It can be chosen nondecreasing, continuous, and (by the boundedness of $h$) bounded.
 \begin{align*}
    |I_1| &\le \frac{t_1-t_2}{\ve}e^{-\gamma\underline t/\ve} \gamma \|h\|_{L^\infty(\R^2)}M_{g_0} 
      + e^{-\gamma\underline t/\ve} \int_{\R^2}\psi\left(\frac{t_1-t_2}{\ve}e^{-\underline t/\ve}|\vp-\vpa| \right)g_0\:\md\vpa\md\vp \\
     &\le \frac{t_1-t_2}{\underline t \,e} \|h\|_{L^\infty(\R^2)}M_{g_0} 
      + \int_{\R^2}\psi\left(\frac{t_1-t_2}{\underline t\,e}|\vp-\vpa| \right)g_0\:\md\vpa\md\vp \,,
\end{align*}
where we have used $ze^{-z}\le e^{-1}$. The right hand side is independent of $\ve$ and tends to zero as $t_1-t_2\to 0$, where for the second term dominated 
convergence can be employed.
\begin{align*}
  |I_2| &\le \frac{\lambda}{\gamma} \left( e^{-\gamma t_2/\ve} - e^{-\gamma t_1/\ve}\right) \|h\|_{L^\infty(\R^2)} M_{\feps}^2
    \le \frac{\lambda(t_1 - t_2)}{\ve} e^{-\gamma \underline t/\ve} \|h\|_{L^\infty(\R^2)} M^2 \\
    &\le \frac{\lambda(t_1 - t_2)}{\gamma\underline t\,e}  \|h\|_{L^\infty(\R^2)} M^2 \,,
\end{align*}
with the same result as for $I_1$.
\begin{align*}
  |I_3| &\le \frac{\lambda}{\gamma} \|h\|_{L^\infty(\R^2)} 2M_{\feps} (t_1-t_2) \sup_{s>0} \|\pa_t \feps\|_{L^1(\R)} \,.
\end{align*}
With \eqref{dtf-est} and with the uniform boundedness of the moments, these three estimates together imply tight equicontinuity of $\{\geps\}_\ve$ with respect to 
$t\in (\underline t,\infty)$. This implies that  there exists a measure $g$ such that a sequence $\{g_{\ve_n}\}$, where w.l.o.g. $\{\ve_n\}$ is the same as above, 
converges to $g$ tightly and uniformly in $t\in (\underline t,\infty)$ for every $\underline t > 0$.

The weak formulation of the mild formulation \eqref{mild-eps} of the $\geps$-equation at time $t$ can be written as 
\begin{align*}
   \int_{\R^2} h\geps \md\vpa\md\vp =& e^{-\gamma t/\ve} \int_{\R^2} h(\Phi^{t/\ve}, \Phi_*^{t/\ve})g_0 \md\vpa\md\vp \\
   &+ \lambda \int_0^{t/\ve} e^{-\gamma \sigma} \int_{\R^2} h(\Phi^\sigma, \Phi_*^\sigma) \feps(\cdot,t-\ve \sigma) f_{\ve,\ast}(\cdot,t-\ve\sigma)\md\vpa\md\vp\:\md\sigma
\end{align*}
If $h\in C_b(\R^2)$ then for fixed $\sigma$ also $h(\Phi^\sigma, \Phi_*^\sigma)\in C_b(\R^2)$. Since weak convergence of two measures implies weak convergence
of the product measure to the product measure of the limits \cite[Theorem 2.8 (ii)]{Billingsley}, we can pass to the limit $\ve\to 0$ in the last integral over $\R^2$. 
Passage to the limit in the integral with respect to $\sigma$ is then a consequence of dominated convergence. The first term on the right hand side obviously tends to zero 
for every $t>0$. This shows that the limits $f$ and $g$ satisfy \eqref{mild-limit}. Passing to the limit in the distributional formulation of the $\feps$-equation is straightforward
since $M_{\feps}\to M_f = M_{f_0}$ by weak convergence of $\feps$.

Finally the restriction to subsequences is not necessary by the uniqueness result in Theorem \ref{f-kin-ex-un-dec}.
\end{proof}

\section{Formal properties of the Deterministic Collision Time Model} \label{c5:noninstdet} 

Let $D:= \{(\vp,\vp):\, \vp\in\R\}$ denote the main diagonal in the $(\vp,\vpa)$-plane with length element $\md\ell= \sqrt{2} \:\md\vp$ and let $\nu_{\pm} = \pm 2^{-1/2}(1,-1)$ 
be the unit outward normal vector along $D$ for the domain $\vp \lessgtr \vpa$. The computation
\begin{align*}
  &\int_{\R^2} \nabla\cdot (v_2 g) \md\vpa\md\vp = \int_{\vp<\vpa} \nabla\cdot (v_2 g) \md\vpa\md\vp + \int_{\vp>\vpa} \nabla\cdot (v_2 g) \md\vpa\md\vp \\
  &= \int_{D} \left( \nu_+\cdot (v_2 g)(\vp,\vp+) + \nu_-\cdot (v_2 g)(\vp,\vp-)\right) \md\ell = 2\int_{\R} \bar g\:\md\vp
\end{align*}
justifies the choice of the source term in the first equation of \eqref{mainmodel-d}, since it implies mass conservation:
\begin{equation}\label{masscons-d}
    \frac{\md}{\md t} (M_f + 2M_g) = 0 \,,\qquad\text{i.e.,}\quad M_f(t) + 2 M_g(t) = M := M_{f_0} + 2 M_{g_0}\,.
\end{equation}
Similarly to above we compute
\begin{align*}
  &\int_{\R^2} \vp\nabla\cdot (v_2 g) \md\vpa\md\vp = 2\int_{\R} \vp\bar g\:\md\vp - \frac{1}{2} \int_{\R^2} g\sgn(\vpa - \vp)\md\vpa\md\vp \,,
\end{align*}
where the last integral vanishes by oddness of the integrand. Therefore we have, as for the SCTM, also the second conservation law
$$
    \frac{\md}{\md t} (I_f + 2I_g) = 0 \,,\qquad\text{i.e.,}\quad I_f(t) + 2 I_g(t) = I := I_{f_0} + 2 I_{g_0}\,.
$$
An important difference to the SCTM is the lack of complete information on the dynamics of $M_f,M_g,I_f,I_g$. The rate of particles leaving the collision state
is given in terms of the trace $\bar g$ and cannot be expressed in terms of the moments. With the mean
\begin{align}\label{c5:deterministic_mean}
	\vp_{\infty} := \frac{I}{M} \,,
\end{align}
we define the variances $V_f$, $V_g$ and obtain, again with a similar computation,
\begin{align*}
	\frac{\md}{\md t} (V_f + 2V_g) =  - 2 \int_{\R^2} |\vp-\vpa| g \, \md \vpa \md \vp \,,
\end{align*}
i.e. the variance is nonincreasing as for the SCTM. However, we do not get any additional information. Therefore we do not have a rigorous constructive result concerning
decay to equilibrium. Formally, from the dissipation term above we expect $g$ to concentrate along the diagonal as $t\to\infty$. Therefore we do not expect any more collision
dynamics after long time, which implies that also the source term $\lambda f\fa$ should concentrate along the diagonal. For the tensor product $f\fa$ this is only possible if
$f$ is concentrated at one point. From the right hand side of the $f$-equation we then deduce that also the trace $\bar g$ concentrates and that the limiting masses
satisfy $2M_{g_\infty} = \lambda M_{f_\infty}^2$. Thus, we expect convergence to the equilibrium state
\begin{align}
	(f_{\infty}(\vp),g_{\infty}(\vp,\vpa)):= \left(M_{f_{\infty}} \delta(\vp-\vp_\infty), \frac{\lambda M_{f_{\infty}}^2}{2} \delta(\vpa-\vp)\delta(\vp-\vp_\infty)\right) \,,
\end{align}
with 
$$
   M_{f_\infty} = \frac{2M}{1+\sqrt{1+4\lambda M}} \,.
$$

\section{Existence and uniqueness for the Deterministic Collision Time Model}\label{sec:det_ex}

We start with the mild formulation of the $g$-equation in \eqref{mainmodel-d}:
\begin{equation}\label{mild-g}
   g(\cdot,\cdot,t) = g_0(\Phi^{-t},\Phi_*^{-t}) + \lambda \int_0^t f(\Phi^{s-t},s) f(\Phi_*^{s-t},s) \md s \,,
\end{equation}
with 
$$
  \Phi^{-t}(\vp,\vpa) = \vp + \frac{t}{2}\sgn(\vp-\vpa) \,,\qquad \Phi_*^{-t}(\vp,\vpa) = \vpa - \frac{t}{2}\sgn(\vp-\vpa) \,.
$$
By the indistinguishability property, the trace of $g$ along the diagonal can be written as
$$
   \bar g(\vp,t) = g_0\left(\vp + \frac{t}{2},\vp - \frac{t}{2}\right) + \lambda \int_0^t f\left(\vp + \frac{t-s}{2},s\right)f\left(\vp-\frac{t-s}{2},s\right)\md s \,.
$$
Note that for $g_0\in L^1(\R^2)$ this is in general not in $L^1(\R)$ for fixed $t$. This requires some care in the formulation of the problem, which we
write in a mild formulation for $f$, eliminating $g$ by substitution of the above:
\begin{align}
    f(\vp,t) = &F_f(0,t)f_0(\vp) + 4 \int_0^t F_f(s,t) g_0\left(\vp+\frac{s}{2},\vp-\frac{s}{2}\right)\md s \nonumber\\
    &+ 4\lambda \int_0^t F_f(s,t) \int_0^s f\left(\vp+\frac{s-r}{2},r\right)f\left(\vp-\frac{s-r}{2},r\right)\md r \:\md s \,,\label{mild-f}
\end{align}
with
$$
    F_f(s,t) = \exp\left(-2\lambda \int_s^t M_f(r)\md r\right) \,.
$$

\begin{theorem}\label{existence}
Let $(f_0,g_0) \in L_+^1(\R)\times L_+^1(\R^2)$. Then \eqref{IC}, \eqref{mainmodel-d} has a unique mild (in the sense \eqref{mild-g}, \eqref{mild-f}) solution 
$(f,g) \in L^\infty\big((0,\infty);\: L_+^1(\R)\times L_+^1(\R^2)\big)$.
\end{theorem}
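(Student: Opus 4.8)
The plan is to run a Banach fixed-point argument directly on the $f$-equation \eqref{mild-f}, since $g$ has been eliminated and \eqref{mild-g} then recovers $g$ from $f$. First I would fix $M_f$: observe that \eqref{masscons-d} together with nonnegativity gives $0 \le M_f(t) \le M$ for any admissible solution, so that $0 < F_f(s,t) \le 1$; but since $M_f$ is not known a priori (unlike in Model 1, where it solves a closed ODE), I would instead treat the whole right-hand side of \eqref{mild-f} as a nonlinear map $\mathcal{T}$ on $f$ alone, with $F_f(s,t)$ regarded as a functional of $f$ through $M_f(r) = \int_\R f(\cdot,r)\,\md\vp$. On the set of nonnegative $f \in L^\infty((0,T);L^1(\R))$ with $\|f(\cdot,t)\|_{L^1} \le 2M_{f_0} + \text{(contribution of } g_0)$, I would check that $\mathcal{T}$ preserves nonnegativity (all three terms in \eqref{mild-f} are manifestly nonnegative) and maps the set into itself for $T$ small, using that each of the shift maps $\vp \mapsto \vp \pm \tfrac{s}{2}$ is measure-preserving, so the $L^1$-norms of $g_0(\vp+\tfrac{s}{2},\vp-\tfrac{s}{2})$ integrated in $\vp$ is controlled by $\|g_0\|_{L^1(\R^2)}$ (this is exactly the Fubini/change-of-variables computation that makes the ``in general not in $L^1(\R)$ for fixed $t$'' remark harmless once integrated in $s$).

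Next I would establish the contraction estimate. Writing the difference $\mathcal{T}f - \mathcal{T}\tilde f$, there are two sources of nonlinearity: the quadratic term $f(\vp+\tfrac{s-r}{2},r)f(\vp-\tfrac{s-r}{2},r)$, handled by the standard splitting $f f_* - \tilde f \tilde f_* = (f-\tilde f)f_* + \tilde f(f_*-\tilde f_*)$ together with the uniform $L^1$-bound on the factors, and the exponential prefactors $F_f(s,t)$, which are Lipschitz in $M_f$ and hence in $f$ (via $|e^{-a}-e^{-b}| \le |a-b|$ and $|M_f(r)-\tilde M_f(r)| \le \|f(\cdot,r)-\tilde f(\cdot,r)\|_{L^1}$). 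Each term carries an explicit power of $T$ (from the $\int_0^t$ and $\int_0^s$), so for $T$ small enough $\mathcal{T}$ is a contraction on $C([0,T];L^1(\R))$ — or, more cautiously, on $L^\infty((0,T);L^1(\R))$, since continuity in time of $f$ is not claimed in the theorem. This yields a unique local mild solution $f$; substituting into \eqref{mild-g} defines $g$, and one checks $g(\cdot,\cdot,t)\in L^1_+(\R^2)$ by the same measure-preserving change of variables, noting $\|g_0(\Phi^{-t},\Phi_*^{-t})\|_{L^1(\R^2)} = \|g_0\|_{L^1(\R^2)}$ and bounding the $\lambda\int_0^t$ term by $\lambda t (\sup_s M_f(s))^2 \le \lambda t M^2$.

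For global existence I would use the conservation law \eqref{masscons-d} (which holds for the constructed mild solution by integrating \eqref{mild-f} in $\vp$ and \eqref{mild-g} in $(\vp,\vpa)$ and combining) to get the a priori bound $M_f(t) + 2M_g(t) = M$ for all $t$ in the interval of existence. Since the length $T$ of the contraction interval depends only on $M$ and $\lambda$ (and the fixed data $f_0,g_0$ through $M$), the local solution can be continued by the usual step-by-step argument to all of $[0,\infty)$, giving $(f,g)\in L^\infty((0,\infty);L^1_+(\R)\times L^1_+(\R^2))$ with the stated uniform bound. Uniqueness globally follows from local uniqueness plus this continuation.

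The main obstacle is the trace term $\bar g$: because $g_0 \in L^1(\R^2)$ does not give $\bar g(\cdot,t)\in L^1(\R)$ pointwise in $t$, one cannot work with the $f$- and $g$-equations as a system in $L^1\times L^1$ with $\bar g$ as an independent quantity — the source $2\bar g$ in the $f$-equation only makes sense after integration in $t$. The resolution, already signposted in the excerpt by the derivation of \eqref{mild-f}, is exactly to integrate the $g$-mild-formula in time before inserting it, so that only the space-time integral $\int_0^t F_f(s,t)\,\bar g(\vp,s)\,\md s$ appears, and this \emph{is} in $L^1(\R)$ by Fubini. I would make sure the fixed-point space reflects this — i.e. never ask for $\bar g$ alone to be integrable — which is also why the theorem only asserts $L^\infty$-in-time rather than continuity, in contrast to Theorem \ref{thm:ex_un}. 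A secondary technical point to handle carefully is measurability: the shifts depend on $\sgn(\vp-\vpa)$, but in \eqref{mild-f} the arguments $\vp\pm\tfrac{s-r}{2}$ are already the resolved ones, so no sign issues remain there; one only needs joint measurability of $(\vp,s,r)\mapsto f(\vp\pm\tfrac{s-r}{2},r)$, which is routine.
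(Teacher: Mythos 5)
Your proposal follows essentially the same route as the paper: a Banach fixed-point argument for the map defined by the right-hand side of \eqref{mild-f} on a ball in $L^\infty\big((0,T);L^1_+(\R)\big)$, with $F_f$ treated as a Lipschitz functional of $f$, the quadratic term handled by the standard splitting, the shift/change-of-variables $(\vp,s)\mapsto(\vp+\tfrac{s-r}{2},\vp-\tfrac{s-r}{2})$ used to control all terms in $L^1$, and global continuation via the conservation law \eqref{masscons-d}. The argument is correct and matches the paper's proof in all essentials, including the observation that only the time-integrated trace of $g$ need be controlled.
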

\begin{proof}
We denote the right hand side of \eqref{mild-f} by $\mathcal{F}[f](\vp,t)$ and note that the fixed point map $\mathcal{F}$ preserves nonnegativity and, by
$F_f(s,t)\le 1$ and the consequence
\begin{eqnarray*}
    M_{\mathcal{F}[f]}(t) &\le& M_{f_0} + 4 M_{g_0} + 4\lambda \int_0^t \left(\int_r^t \int_{\R} f\left(\vp+\frac{s-r}{2},r\right)f\left(\vp-\frac{s-r}{2},r\right)\md\vp\:\md s\right) \md r\\
    &\le& M_{f_0} + 4 M_{g_0} + 4\lambda \int_0^t M_f(r)^2\md r \,,
\end{eqnarray*}
it maps $L^\infty\big((0,T);\: L_+^1(\R)\big)$ into itself. Here we have used the coordinate transformation
\begin{equation}\label{phis2psi}
   (\vp,s) \to (\psi,\psi_*) = \left( \vp+\frac{s-r}{2}, \vp-\frac{s-r}{2}\right) \,.
\end{equation}
More precisely, with $\bar M := 2 M_{f_0} + 8M_{g_0}$, the set
$$
   \mathbb{F} := \left\{ f\in L^\infty\big((0,T];\: L_+^1(\R)\big):\, \sup_{0<t<T}M_f(t) \le \bar M\right\}
$$
is mapped into itself, if $T$ is small enough such that $8\lambda T \bar M \le 1$.

For proving a contraction property, we choose
$f$, $\tf \in \mathbb{F}$ and estimate
\begin{eqnarray*}
	\|\mathcal{F}[f](\cdot,t)-\mathcal{F}[\tf](\cdot,t)\|_{L^1(\R)} &\le&  4\lambda \int_0^t \int_r^t \int_{\R^2} \Biggl| F_f(s,t) f\left(\vp+\frac{s-r}{2},r\right)f\left(\vp-\frac{s-r}{2},r\right) \\
	&&\qquad\quad - F_{\tf}(s,t) \tf\left(\vp+\frac{s-r}{2},r\right)\tf\left(\vp-\frac{s-r}{2},r\right) \Biggr|  \md \vp \,\md s\, \md r \\
	&\le& I + II + III\,,
\end{eqnarray*}
where the splitting into three terms comes from estimating the integrand by
$$
   |F_f f\fa - F_{\tf}\tf\tf_*| \le |F_f - F_{\tf}| f\fa + |f - \tf| \fa + \tf |\fa -\tf_*| \,.
$$
For the first term we use
$$
   |F_f(s,t) - F_{\tf}(s,t)| \le 2\lambda \int_s^t |M_f(r) - M_{\tf}(r)| \md r \le 2\lambda T \|f - \tf\|_{L^\infty((0,T);\: L_+^1(\R))}
$$
and the transformation \eqref{phis2psi} to obtain
$$
    I \le 8(\lambda T \bar M)^2 \|f - \tf\|_{L^\infty((0,T);\: L_+^1(\R))} \,.
$$
For the second and the third term the coordinate change \eqref{phis2psi} immediately gives
$$
   II + III \le 8\lambda T \bar M \|f - \tf\|_{L^\infty((0,T);\: L_+^1(\R))}  \,.
$$
By these estimates $\mathcal{F}$ is a contraction on $\mathbb{F}$ for $T$ small enough, implying local existence. Global existence is then a consequence 
of the mass conservation property \eqref{masscons-d}, implying $M_f(t) \le M_{f_0} + 2 M_{g_0}$.

Finally it is straightforward to check that $g$ defined by \eqref{mild-g} satisfies the properties stated in the theorem.
\end{proof}

\section{The formal instantaneous limit for the Deterministic Collision Time Model}\label{sec:il2}

With the same motivation as in Section \ref{sec:il1} we introduce the rescaling
$$
v_2 \to \frac{v_2}{\ve} \,, \qquad g \to \ve g \,,
$$
in \eqref{mainmodel-d}: 
\begin{equation}\label{c5:deterministic_modeleps}
	\begin{split}
		&\pa_t \feps = 2\left(2 \bar{g}_\ve - \lambda M_{\feps} \feps \right) \,, \\
		&\ve \pa_t \geps + \nabla \cdot (v_2 \geps) = \lambda \feps f_{\ve,\ast} \,.
	\end{split}
\end{equation}
Solving the initial value problem for the second equation leads to the diagonal trace
$$
   \bar g_\ve(\vp,t) = g_0\left(\vp + \frac{t}{2\ve},\vp - \frac{t}{2\ve}\right) + \lambda \int_0^{t/\ve} \feps\left(\vp + \frac{s}{2},t-\ve s\right)\feps\left(\vp-\frac{s}{2},t-\ve s\right)\md s \,,
$$
with the formal limit
\begin{eqnarray*}
   \bar g(\vp,t) &=&  \lambda \int_0^\infty f\left(\vp + \frac{s}{2},t\right) f\left(\vp-\frac{s}{2},t\right)\md s 
   =  \frac{\lambda}{2} \int_{\R} f\left(\vp + \frac{s}{2},t\right) f\left(\vp-\frac{s}{2},t\right)\md s \\
   &=& \lambda \int_{\R} f(\vp',t)f(\vpa,t)\md\vpa \,,\qquad\text{with } \vp' = 2\vp-\vpa \,,
\end{eqnarray*}
assuming that $g_0$ decays at infinity. The limiting kinetic equation for $f$ can therefore be written as 
\begin{equation}\label{f-kin2}
    \pa_t f = Q_2(f,f) := 2\lambda \int_{\R} (2f'\fa - f \fa)\md\vpa \,.
\end{equation}
This is a \emph{sticky particle model,} where particles with pre-collisional states $\vp', \vpa$ have the same post-collisional state $\vp = \frac{\vp' + \vpa}{2}$.

We observe that the model obtained after performing the instantaneous limit corresponds to the usual midpoint/alignment-model, recent matter of investigation in \cite{c5:DFR, c5:HKMS} and with additional noise term in \cite{c5:carlen}.

The weak formulation of $Q_2$ is obtained by transformation to pre-collisional states in the gain term and by symmetrization in the loss term:
\begin{align}\label{c5:Qd_weak}
	&\int_{\R} Q_2(f,f) h \, \md \vp  =2\lambda  \int_{\R^2} f \af \left(h\left(\frac{\vp+\vpa}{2}\right)-\frac{h(\vp)+h(\vpa)}{2}\right) \, \md \vpa \md \vp \,.
\end{align}
With $h(\vp) = 1$ and $h(\vp)=\vp$ we observe that the conservation laws are preserved in the limit:
\begin{align}\label{c5:deterministic_inst_mass}
	M_f(t) = M_{f_0} \,,\qquad I_f(t) = I_{f_0} =: M_{f_0}\vp_\infty \,.
\end{align} 
With $h(\vp)=(\vp-\vp_{\infty})^2$ we note that the variance 
\begin{align*}
	V_f := \int_{\R} (\vp-\vp_\infty)^2 f \, \md \vp.
\end{align*}
satisfies the ODE
\begin{align*}
	\frac{\md}{\md t} V_f = -\frac{\lambda}{2} \int_{\R^2}(\vp-\vpa)^2 f f_* \, \md \vpa \md \vp  = -\lambda M_{f_0} V_f \,,
\end{align*}
and therefore decays exponentially.

Finally we note that the existence, uniqueness and decay-to-equilibrium result Theorem 4 also holds for \eqref{f-kin2}. Again the proof is rather straightforward and
omitted.

\end{document}